\def\VR{\kern-\arraycolsep\strut\vrule &\kern-\arraycolsep}
\def\vr{\kern-\arraycolsep & \kern-\arraycolsep}
\newtheorem{theorem}{Theorem}
\newtheorem*{theoremnonum}{Theorem}
\newtheorem{lemma}[theorem]{Lemma}
\newtheorem{prop}[theorem]{Proposition}
\newtheorem{corollary}[theorem]{Corollary}
\newtheorem{conjecture}[theorem]{Conjecture}
\theoremstyle{definition}
\newtheorem{definition}[theorem]{Definition}
\newtheorem{example}[theorem]{Example}
\newtheorem{question}[theorem]{Question}
\theoremstyle{remark}
\newtheorem{remark}[theorem]{Remark}
\newcommand{\Hom}{\operatorname{Hom}}
\newcommand{\End}{\operatorname{End}}
\newcommand{\Aut}{\operatorname{Aut}}
\newcommand{\Ext}{\operatorname{Ext}}
\newcommand{\Proj}{\operatorname{Proj}}
\newcommand{\SI}{\operatorname{SI}}
\newcommand{\SL}{\operatorname{SL}}
\newcommand{\GL}{\operatorname{GL}}
\newcommand{\PGL}{\operatorname{PGL}}
\newcommand{\ZZ}{\mathbb Z}
\newcommand{\NN}{\mathbb N}
\newcommand{\PP}{\mathbb P}
\newcommand{\Id}{\operatorname{Id}}
\newcommand{\Mat}{\operatorname{Mat}}
\newcommand{\rad}{\operatorname{rad}}
\newcommand{\ddim}{\operatorname{\mathbf{dim}}}
\newcommand{\dd}{\operatorname{\mathbf{d}}}
\newcommand{\hh}{\operatorname{\mathbf{h}}}
\newcommand{\M}{\operatorname{\mathcal{M}}}
\newcommand{\module}{\operatorname{mod}}
\newcommand{\rk}{\operatorname{rank}}
\newcommand{\key}[1]{\emph{#1}}
\begin{document}
\title{Module varieties and representation type of finite-dimensional algebras}
\author{Calin Chindris}
\address{University of Missouri-Columbia, Mathematics Department, Columbia, MO, USA}
\email[Calin Chindris]{chindrisc@missouri.edu}

\author{Ryan Kinser}
\address{Northeastern University, Department of Mathematics, Boston, MA, USA}
\email[Ryan Kinser]{r.kinser@neu.edu}

\author{Jerzy Weyman}
\address{Northeastern University, Department of Mathematics, Boston, MA, USA}
\email[Jerzy Weyman]{j.weyman@neu.edu}
\thanks{The first author was supported by NSF grant DMS-1101383, the second author by NSA Young Investigator Grant H98230-12-1-0244, and the third author by NSF grant DMS-0901885.}

\begin{abstract} In this paper we seek geometric and invariant-theoretic characterizations of (Schur-)representation finite algebras. To this end, we introduce two classes of finite-dimensional algebras:  those with the dense-orbit property and those with the multiplicity-free property. We show first that when a connected algebra $A$ admits a preprojective component, each of these properties is equivalent to $A$ being representation-finite. Next, we give an example of an algebra which is not representation-finite but still has the dense-orbit property.  We also show that the string algebras with the dense orbit-property are precisely the representation-finite ones. Finally, we show that a tame algebra has the multiplicity-free property if and only if it is Schur-representation-finite.  
\end{abstract}

\maketitle
\setcounter{tocdepth}{1}
\tableofcontents

\section{Introduction}
Throughout the article, $K$ always denotes an algebraically closed field of characteristic zero, and ``algebra'' refers to an associative $K$-algebra with identity. (We will remark at times on particular results that hold in arbitrary characteristic.)  All modules are assumed to be finite-dimensional left modules. We use interchangeably the vocabulary of modules over finite-dimensional algebras, and that of representations of quivers with relations.  A summary of the background on these, and their varieties of modules, is given in Section \ref{sect:background}.

\subsection{Motivation and context}
The representations of an algebra $A$ can be studied geometrically by considering the affine varieties $\module(A,\dd)$ of modules of fixed dimension vector, under the actions of the corresponding products of general linear groups $\GL(\dd)$.  In this setting, isomorphism classes of representations are precisely orbits, so the standard tools of algebraic geometry for determining orbits of a group acting on a variety become relevant.
We are interested in classifying those algebras whose module varieties satisfy certain invariant-theoretic properties, and to compare such classification results with the classical  representation theoretic properties of these algebras, in particular, the notion of finite representation type.
In this paper, we introduce the following properties for an algebra $A$: 
\begin{description}
\item[DO property]
for each dimension vector $\dd$ of $A$, $\GL(\dd)$ acts on each irreducible component of $\module(A,\dd)$ with a dense orbit;
\item[MF property]
for each dimension vector $\dd$ of $A$ and each irreducible component $C$ of $\module(A,\dd)$, the algebra of semi-invariants $K[C]^{\SL(\dd)}$ is multiplicity-free. 
\end{description}

Dense orbit properties similar to our DO property have been studied in other contexts; we briefly mention here some cases known to us, certainly this list must not be complete.  Sato and Kimura considered the situation of a linear algebraic group acting on a vector space \cite{MR0430336}; 
Richardson showed that a parabolic subgroup of a connected, semi-simple algebraic group has a dense orbit under the adjoint action on the Lie algebra of its unipotent radical \cite{MR0330311};  
Hille and R\"ohrle studied a generalization of Richardson's theorem involving the descending central series of a parabolic \cite{MR1987340};
and Hille and Goodwin studied a variation for a Borel subgroup of $\GL(n)$ \cite{MR2356319}.

Multiplicity-free actions have been intensively studied in many contexts. Important results on multiplicity-free actions include: the Peter-Weyl Theorem; $\GL_m-\GL_n$-duality and, more generally, Howe dualities \cite{MR986027}; branching laws for the general linear and orthogonal groups \cite{MR986027}; the classification of the multiplicity-free linear actions of reductive groups due to Kac \cite{MR575790} and Benson-Ratcliff \cite{MR1382030}; the classification of homogeneous spherical varieties due to Kramer \cite{MR528837} and Brion \cite{MR822838}. 

\subsection{Summary of results}
We first show that for algebras in a certain class studied by representation theorists -- namely, algebras with a preprojective component -- the two properties above are each equivalent to the algebra being finite representation type (see Theorem \ref{thm-preproj-comp} and the paragraph preceding it).

\begin{theoremnonum} Let $A$ be a connected, bound quiver algebra with a prepojective component. Then, the following properties are equivalent:
\begin{enumerate}
\renewcommand{\theenumi}{\arabic{enumi}}
\item $A$ is representation-finite;
\item $A$ has the DO property;
\item $A$ has the MF property.
\end{enumerate}
\end{theoremnonum}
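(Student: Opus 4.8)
The plan is to prove the implications $(1)\Rightarrow(2)\Rightarrow(3)\Rightarrow(1)$. The first two are formal and hold for any finite-dimensional algebra; the content of the theorem is the last implication, which is where the preprojective component is used. For $(1)\Rightarrow(2)$: if $A$ is representation-finite then, for every dimension vector $\dd$, there are finitely many isomorphism classes of $\dd$-dimensional modules, hence finitely many $\GL(\dd)$-orbits in $\module(A,\dd)$; an irreducible variety that is a finite union of its (locally closed) orbits must equal the closure of one of them, and that orbit is then dense. So every irreducible component of $\module(A,\dd)$ carries a dense orbit.

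For $(2)\Rightarrow(3)$: fix $\dd$ and an irreducible component $C$ of $\module(A,\dd)$, and write $G=\GL(\dd)$, $H=\SL(\dd)$. If $G$ acts on $C$ with a dense orbit then $K(C)^{G}=K$; hence, given nonzero semi-invariants $f_{1},f_{2}\in K[C]^{H}$ of the same weight, the ratio $f_{1}/f_{2}$ is a $G$-invariant rational function on $C$, so it is constant and $f_1,f_2$ are proportional. Thus every weight space of $K[C]^{H}$ is at most one-dimensional, i.e.\ $K[C]^{H}$ is multiplicity-free.

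For $(3)\Rightarrow(1)$ I would argue the contrapositive: assuming $A$ connected, with a preprojective component, but \emph{not} representation-finite, I produce $\dd$ and a component $C$ of $\module(A,\dd)$ for which $K[C]^{\SL(\dd)}$ is not multiplicity-free. By the known structure theory of algebras with a preprojective component (equivalently: weak positivity of the Tits form $q_{A}$ characterizes representation-finiteness in this class, and the minimal representation-infinite algebras admitting a preprojective component are classified), such an $A$ contains a full convex subcategory $B$ which is tame concealed or pg-critical. The MF property is inherited by full convex subcategories: if $Q'$ is the convex full subquiver underlying $B$ and $\ee$ is a dimension vector supported on $Q'$, then every arrow of the quiver of $A$ outside $Q'$ and every relation of $A$ involving a vertex outside $Q'$ is forced to be zero on $\module(A,\ee)$, while relations between vertices of $Q'$ stay inside $Q'$ by convexity; hence $\module(A,\ee)=\module(B,\ee)$ with the same $\GL(\ee)$-action, and so every irreducible component of $\module(B,\ee)$ has a multiplicity-free algebra of semi-invariants. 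It therefore suffices to show that no tame concealed and no pg-critical algebra has the MF property. Each such $B$ is tame and representation-infinite, with a sincere positive ``null root'' $\delta$ spanning the radical of $q_{B}$; for a suitable weight $\theta$ the moduli space of $\theta$-semistable representations of dimension vector $\delta$ is a projective line $\PP^{1}$ (parametrizing a one-parameter family of tubes), so the graded ring $\bigoplus_{n\ge 0}K[C]^{\SL(\delta)}_{n\theta}$ of the relevant component $C\subseteq\module(B,\delta)$ has $\operatorname{Proj}\cong\PP^{1}$ and hence carries at least two linearly independent semi-invariants of weight $\theta$ (pullbacks of the homogeneous coordinates of $\PP^{1}$); thus $K[C]^{\SL(\delta)}$ is not multiplicity-free, and $(3)$ fails.

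The main obstacle is this last step: checking uniformly that every tame concealed algebra --- an infinite but classified family --- and every pg-critical algebra fails the MF property, and, underpinning everything, the structural input (which genuinely requires the preprojective hypothesis) that a representation-infinite algebra with a preprojective component has a tame concealed or pg-critical convex subcategory. The preprojective hypothesis cannot be dropped: the paper later exhibits a representation-infinite algebra with the DO property. The remaining steps are routine. (One can also note that $(2)$ and $(3)$ are in fact equivalent for an arbitrary algebra, by identifying the $\SL(\dd)$-quotient of $C$ with an affine toric variety on which $\GL(\dd)/\SL(\dd)$ acts with a dense orbit, but this is not needed above.)
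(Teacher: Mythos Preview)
Your argument for $(2)\Rightarrow(3)$ via $K(C)^{\GL(\dd)}=K$ is correct and more direct than the paper's, which instead factors through an intermediate notion: DO implies Schur-representation-finite (at most one Schur module per component, since endomorphism dimension strictly increases along orbit closures), which in turn implies MF because every moduli space $\M(C)^{ss}_\theta$ is then zero-dimensional. The paper's detour pays off later (Schur-representation-finiteness is used separately in the tame case), but for this theorem your route is cleaner.

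However, $(3)\Rightarrow(1)$ has a genuine gap in the structural input. The Happel--Vossieck theorem says that a connected representation-infinite algebra with a preprojective component has a tame concealed \emph{quotient}, not a tame concealed (or pg-critical) \emph{full convex subcategory}; the latter is false already for the $3$-Kronecker quiver $A=KQ$, which is hereditary (hence has a preprojective component) and representation-infinite, but whose only proper full convex subcategories are the two semisimple one-vertex algebras, while $A$ itself is wild and hence not tame concealed. (The pg-critical algebras belong to the theory of strongly simply connected algebras of polynomial growth and play no role here.) Accordingly, the paper proves that MF passes to \emph{quotients}---via the surjection $K[C']^{\SL(\dd)}\twoheadrightarrow K[C]^{\SL(\dd)}$ induced by a $\GL(\dd)$-equivariant closed embedding $C\hookrightarrow C'$---rather than to convex subcategories. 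Your convex-subcategory inheritance argument is correct but does not rescue the proof, since the needed subcategory need not exist. The endgame (a tame concealed algebra has $\M(B,\hh)^{ss}_\theta\cong\PP^1$ for suitable $\hh,\theta$, hence is not MF) is the same in both approaches.

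Finally, the parenthetical claim that $(2)$ and $(3)$ are equivalent for arbitrary algebras is false: the paper later exhibits a representation-infinite string algebra (the ``butterfly'') which is MF but, being a representation-infinite string algebra, is not DO. A dense torus orbit on $C//\SL(\dd)$ does not lift to a dense $\GL(\dd)$-orbit on $C$, because the fibres of $C\to C//\SL(\dd)$ may contain infinitely many $\SL(\dd)$-orbits.
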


It is clear that (1) implies (2) for any algebra, and we show in Proposition \ref{prop:schurfin-mf} that (2) implies (3) in general.
One might imagine these properties to be equivalent for all algebras.  There do not appear to be any  examples or theory in the literature demonstrating the existence of representation-infinite algebras with the DO or MF properties.  So we prove that the family of representation-infinite algebras below has the DO property to demonstrate that it is actually a novel concept (Theorem \ref{thm-example-DO}).  Our proof technique is an algorithm that yields an explicit list of the indecomposable representations with dense orbits \eqref{eq:indecomp}.

\begin{theoremnonum} Let $\Lambda$ be the algebra given by the following quiver and relations:
\begin{equation}
\vcenter{\hbox{
\begin{tikzpicture}[point/.style={shape=circle,fill=black,scale=.5pt,outer sep=3pt},>=latex]
\node[point,label={below:$1$}] (1) at (0,0) {} ;
\node[point,label={below:$2$}] (2) at (2,0) {} edge[in=45,out=-45,loop] node[right] {$b$} ();
\path[->] (1) edge node[above] {$a$} (2) ;
\end{tikzpicture} }}
\qquad
b^n = b^2 a = 0, \quad n\in \NN.
\end{equation}
Then $\Lambda$ is DO for all $n$, but infinite representation type for $n\geq 6$, and even wild representation type for $n > 6$.
\end{theoremnonum}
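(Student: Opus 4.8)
I would handle the two assertions separately: the DO property by a direct analysis of the module schemes together with an explicit classification, and the representation-type statements by covering theory.

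\textbf{The DO property.} Fix a dimension vector $\dd=(d_1,d_2)$. A point of $\module(\Lambda,\dd)$ is a pair $(A,B)$ with $A\in\Mat_{d_2\times d_1}(K)$, $B\in\Mat_{d_2}(K)$ subject to $B^n=0$ and $B^2A=0$, and $\GL(\dd)=\GL_{d_1}\times\GL_{d_2}$ acts by $(g_1,g_2)\cdot(A,B)=(g_2Ag_1^{-1},\,g_2Bg_2^{-1})$. The plan is to stratify by the Jordan type $\lambda\vdash d_2$ (all parts $\le n$) of the nilpotent matrix $B$: the stratum
\[
Z_\lambda=\bigl\{(A,B):B\ \text{has Jordan type}\ \lambda,\ \Ima A\subseteq\ker B^2\bigr\}
\]
is the total space of a $\GL_{d_2}$-equivariant vector bundle over the nilpotent orbit $\mathcal O_\lambda$ with fibre $\Hom(K^{d_1},\ker B^2)$, hence irreducible, and is open in its closure; since $\module(\Lambda,\dd)=\bigsqcup_\lambda Z_\lambda$, the irreducible components of $\module(\Lambda,\dd)$ are the maximal members of $\{\overline{Z_\lambda}\}$, and (using that $\dim\ker B^2$ is lower semicontinuous under degeneration) which $\lambda$ occur can be read off combinatorially from $\dim Z_\lambda=\dim\mathcal O_\lambda+d_1\dim\ker B^2_\lambda$. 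For each component $\overline{Z_\lambda}$ it then suffices to produce one module $M\in Z_\lambda$ with $\dim_K\End_\Lambda(M)=\dim\GL(\dd)-\dim Z_\lambda$: since $\dim\mathcal O_M=\dim\GL(\dd)-\dim_K\End_\Lambda(M)$ and $Z_\lambda$ is irreducible, such an $M$ has dense orbit in $Z_\lambda$, hence in $\overline{Z_\lambda}$.

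The heart of the argument --- the algorithm --- is a combinatorial recipe that, given $\dd$ and a component label $\lambda$, writes this $M$ down as an explicit direct sum of indecomposables taken from a finite list \eqref{eq:indecomp}, by distributing the Jordan blocks of $B$ and the columns of $A$ according to $\lambda$. The list \eqref{eq:indecomp} consists of the simple $S_1$, the Jordan-block modules $N_k$ at vertex $2$ for $1\le k\le n$, and a few families of modules with $d_1\le 2$ in which $A$ is pinned down by which layers of $\ker B^2$ it meets (the projective $P_1$ and its near relatives among them). What must be proved is: (i) that the modules on the list really are indecomposable, and the $\Hom$-spaces between all of them are as computed; and (ii) that for every $\dd$ and every component the direct sum prescribed by the algorithm attains the required value $\dim_K\End_\Lambda(M)=\dim\GL(\dd)-\dim Z_\lambda$. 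Given (i), part (ii) is bookkeeping with the $\Hom$-table. I expect the real obstacle to be proving the list is \emph{complete}, i.e. that no indecomposable outside it is ever needed --- equivalently, that the generic module of every stratum $Z_\lambda$ is one of the prescribed sums. Concretely one has to control how a block structure of $B$ and a map $A$ can combine in an indecomposable, and dually to force the decompositions that occur (for example by analyzing modules whose top is a multiple of $S_1$); this is where the work concentrates.

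\textbf{Representation type.} For $n\ge6$ I would pass to the Galois cover $\widetilde\Lambda$ obtained by unrolling the loop $b$: it has vertices $\{1_i,2_i:i\in\ZZ\}$, arrows $a_i:1_i\to2_i$ and $b_i:2_i\to2_{i+1}$, relations $b_{i+n-1}\cdots b_i=0$ and $b_{i+1}b_ia_i=0$, and $\Lambda=\widetilde\Lambda/\ZZ$; since $\charac K=0$ the representation type is inherited along this cover. The convex subcategory of $\widetilde\Lambda$ supported on $\{1_0,1_1\}\cup\{2_0,\dots,2_n\}$ (with the short zero relations inherited from $\Lambda$) has underlying graph a star with arms of lengths $n-1$, $2$, $1$: this is the extended Dynkin graph $\widetilde{\mathbb E}_8$ exactly when $n=6$, and a strictly indefinite (wild hereditary) graph when $n\ge7$. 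For $n=6$ I would use this $\widetilde{\mathbb E}_8$-structure as a guide to build an infinite family of pairwise non-isomorphic indecomposable modules over that subcategory which actually satisfy the relations $b^n=0$ and $b^2a=0$, so that they descend to $\Lambda$; this already shows $\Lambda$ is representation-infinite. For $n\ge7$ one upgrades this to a representation embedding from a wild category --- e.g. from the category of finite-dimensional $K\langle x,y\rangle$-modules --- whose image consists of modules satisfying $b^n=0$ and $b^2a=0$, which proves $\Lambda$ is wild. The delicate point is getting these families to genuinely satisfy the defining relations of $\Lambda$: it is precisely the arm length $n-1$ passing through the values $5$ and $6$ that produces the thresholds $n=6$ and $n=7$.
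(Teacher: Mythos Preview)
Your stratification of $\module(\Lambda,\dd)$ by the Jordan type $\lambda$ of $B$, and the identification of components with maximal closures $\overline{Z_\lambda}$, match the paper's setup. Where you diverge is in how density of the orbit is established. The paper does \emph{not} compare $\dim_K\End_\Lambda(M)$ with $\dim\GL(\dd)-\dim Z_\lambda$. Instead, after fixing $B$ in Jordan form it identifies $(K^{d_2},B)$ with the torsion $K[x]$-module $\bigoplus_i \bar\lambda_i J_i$ and the centralizer $Z_B$ with $\Aut_{K[x]}$ of that module; the matrix $A$ then has entries in $K[x]$, with the relation $b^2a=0$ forcing the entries in block row $i$ to lie in $(x^{i-2})$. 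From there the paper performs explicit row and column operations over $K[x]$---a Gaussian elimination---to bring a general $A$ to normal form and peel off a concrete indecomposable direct summand. Density then follows by induction on $\dd$, with no $\Hom$-table and no dimension bookkeeping. The row-reduction \emph{is} the algorithm; the list of indecomposables falls out at the end rather than being an input.

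Your dimension-counting route is a legitimate alternative, but the step you call ``the algorithm'' and leave unspecified is exactly where the paper's work lives; without it there is nothing to plug into the $\Hom$-table. Also, your remark that the real obstacle is ``completeness of the list'' is slightly misaimed for your own strategy: for the dimension comparison you only need \emph{one} $M\in Z_\lambda$ with the correct $\End$-dimension, not a proof that the generic module of $Z_\lambda$ lies in the list. Producing that $M$ for every $\lambda$ is, however, precisely what the unspecified recipe would have to do, so in practice you would likely end up doing a calculation equivalent to the paper's reduction.

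For the representation-type assertions the paper simply cites the literature (tame for $n=6$, wild for $n>6$). Your covering-theory sketch is a reasonable reconstruction, but note that the convex subcategory you single out inherits the zero relations $b_{i+1}b_ia_i=0$, so it is not literally hereditary of type $\widetilde{E}_8$; as you acknowledge, the one- and two-parameter families still have to be built by hand to respect those relations.
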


We should point out that the example above is of infinite global dimension; in fact, all our examples of representation-infinite DO algebras are 2-point algebras of infinite global dimension.  We believe that all representation-infinite algebras with the DO property should be non-triangular at least (Conjecture \ref{conj:repfiniteDO}).

It is easier to produce examples of representation-infinite algebras with the MF property, one such is given in Example \ref{ex:butterly}. In fact, we are able to characterize when a string algebra has the MF property in terms of a presentation by a quiver with relations (Corollary \ref{cor:MFstring}). 
We conjecture that the MF property should correspond to the representation theoretic notion of ``Schur-representation-finite'' in general (see Definition \ref{def:schurfin}). 
Finally, we are able to give representation theoretic characterizations of the DO and MF properties under certain conditions (Proposition \ref{string-not-DO} and Theorem \ref{thm:tamemf}):

\begin{theoremnonum}
Let $A$ be a bound quiver algebra.
\begin{enumerate}
\renewcommand{\theenumi}{\arabic{enumi}}
\item Assume that $A$ is a string algebra. Then, $A$ is representation-finite if and only if $A$ is DO.
\item Assume that $A$ is tame. Then, $A$ is Schur-representation-finite if and only if $A$ is MF.
\end{enumerate}
\end{theoremnonum}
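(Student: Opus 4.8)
\emph{Part (1).} Since representation-finiteness implies the DO property for every algebra, only the converse for string algebras requires an argument, and I would prove it contrapositively: a representation-infinite string algebra $A$ is not DO. By the Butler--Ringel classification of indecomposables over string algebras, $A$ is representation-infinite exactly when it admits a band $B$, and then the band modules $M(B,1,\lambda)$, $\lambda\in K^\ast$, form an infinite family of pairwise non-isomorphic bricks, all of one fixed dimension vector $\dd$. Let $Z\subseteq\module(A,\dd)$ be the locus of points isomorphic to some $M(B,1,\lambda)$; it is the image of the morphism $\GL(\dd)\times K^\ast\to\module(A,\dd)$ sending $(g,\lambda)$ to $g\cdot M(B,1,\lambda)$, hence irreducible and constructible, and since these modules are bricks whose isomorphism type determines $\lambda$ up to a finite ambiguity, a fibre-dimension count gives $\dim Z=\dim(\GL(\dd)\times K^\ast)-1=\dim\GL(\dd)$, the generic fibre being a one-dimensional stabilizer. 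Choosing an irreducible component $C$ of $\module(A,\dd)$ with $\overline Z\subseteq C$, we get $\dim C\ge\dim\GL(\dd)$; but the scalar subgroup $K^\ast\cdot\mathrm{Id}$ lies in the stabilizer of every point of $\module(A,\dd)$, so every $\GL(\dd)$-orbit has dimension at most $\dim\GL(\dd)-1<\dim C$, and $C$ has no dense orbit. The only delicate point is that band modules are bricks, which is what makes the dimension count for $Z$ go through; I would get this from the combinatorial description of homomorphisms between string and band modules.

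\emph{Part (2).} I expect the implication ``Schur-representation-finite $\Rightarrow$ MF'' to hold with no tameness hypothesis (it is the easy direction of the general conjecture, in the spirit of Proposition~\ref{prop:schurfin-mf}), so the substance of the statement is the converse, which I would prove in contrapositive form: a tame algebra that is not Schur-representation-finite is not MF. The first step is to restate the MF property geometrically. For an irreducible component $C$ of $\module(A,\dd)$, the ring $K[C]^{\SL(\dd)}$ is the sum of its weight spaces $\SI(C)_\sigma$ over the characters $\sigma$ of $\GL(\dd)$, and MF means exactly that $\dim\SI(C)_\sigma\le 1$ for all $\sigma$; since $\bigoplus_{n\ge 0}\SI(C)_{n\sigma}$ is the homogeneous coordinate ring of the projective moduli space $\M_\sigma(C)$ of $\sigma$-semistable modules, MF forces every such moduli space to be a point or empty. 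So it suffices to extract, from the failure of Schur-representation-finiteness, a dimension vector $\dd$, a component $C$ and a weight $\sigma$ with $\M_\sigma(C)$ positive-dimensional --- equivalently, a positive-dimensional family of pairwise non-isomorphic $\sigma$-stable, hence Schurian, modules.

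Here tameness is used, through the structure theory of tame algebras due to Drozd and Crawley-Boevey: the indecomposables of a fixed dimension vector form finitely many rational one-parameter families together with finitely many extra modules, the generic member of each family is a brick, and the families are cut out by parametrizing $K[x]$-$A$-bimodules. Starting from the hypothesis that $A$ has, in the sense of Definition~\ref{def:schurfin}, too many Schurian modules, I would use this structure (and, where needed, the geometry of the set of dimension vectors of bricks) to isolate a single genuine one-parameter family $\{M_\lambda\}$ of non-isomorphic bricks of a common dimension vector, and then pick a weight $\sigma$ for which every $M_\lambda$ is $\sigma$-stable, using that the members of such a family share enough of their submodule lattices that one $\sigma$ separating the common submodules from the common quotients can be found. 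That produces the positive-dimensional moduli space, hence the failure of MF. I expect this last step to be the main obstacle: a brick need not be $\sigma$-stable for any single $\sigma$, so the bare existence of infinitely many Schurian modules does not break MF, and one must genuinely exploit the tame combinatorics to reach a family that is simultaneously stabilizable.
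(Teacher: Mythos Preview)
Your argument has a genuine gap: the assertion that the band modules $M(B,1,\lambda)$ are bricks is false in general, and this is precisely the point on which your dimension count rests. Krause's description of homomorphisms between band modules shows that whenever the band $B$ passes through some vertex more than once, every $M(B,1,\lambda)$ carries a nonzero nilpotent endomorphism, hence is not Schur. The paper's own Example~\ref{ex:butterly} (the ``butterfly'' algebra) is a representation-infinite string algebra whose only primitive band visits the middle vertex twice; none of its band modules are bricks, so your inequality $\dim Z=\dim\GL(\dd)$ fails there and your contradiction evaporates. What survives of your count in the non-brick case is only $\dim\End(M_0)<e$ for a putative dense-orbit module $M_0$, where $e=\dim\End M(B,1,\lambda)$, and that alone does not exclude $M_0$.

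The paper circumvents this by never assuming the band modules are Schur. It first chooses $\dd$ minimal among dimension vectors carrying infinitely many indecomposable band modules, then takes a component $C$ containing such a family and supposes $C=\overline{\GL(\dd)M_0}$. A rank argument peculiar to string algebras (for band modules one has $\dim M=\sum_{a}\rk M(a)$, while string summands make this inequality strict) forces $M_0$ to have no string summands, and minimality of $\dd$ then forces $M_0$ to be an indecomposable band module itself. Now the other band modules in the same family lie in $\partial(\GL(\dd)M_0)$ yet have the same endomorphism dimension as $M_0$ (cf.\ \cite{MR1090218}), which is impossible. In short, the missing idea is the rank/minimality argument pinning $M_0$ down as a band module; the brick shortcut is unavailable.

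\textbf{Part (2).} Your outline is correct and matches the paper's reduction to producing a positive-dimensional moduli space $\M(C)^{ss}_\sigma$, but you explicitly leave the decisive step open: exhibiting a weight $\sigma$ for which a whole family of Schur modules is simultaneously stable. The paper resolves this not by analyzing submodule lattices of a family, but by invoking Crawley-Boevey's theorem that in a tame algebra all but finitely many Schur modules of a given dimension are \emph{homogeneous} ($M\simeq\tau M$). For any homogeneous Schur $M$ one checks directly (Lemma~\ref{lemma:stable-Schur-homogeneous}) that $M$ is $\theta^M$-stable for the canonical weight $\theta^M(\ddim X)=\dim\Hom(M,X)-\dim\Hom(X,\tau M)$. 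A single such $M$ suffices: the stable locus $C^s_{\theta^M}$ is then nonempty and open in $C$, and since $C$ contains infinitely many Schur modules it is not an orbit closure, so $\M(C)^{ss}_{\theta^M}$ has positive dimension, contradicting MF. Thus the key lemma you were missing is the pair ``homogeneous $+$ Schur $\Rightarrow$ $\theta^M$-stable'' together with Crawley-Boevey's supply of homogeneous Schur modules; no uniform stability across a prescribed family is needed.
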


\subsection*{Acknowledgements}  
We give special thanks  to Piotr Dowbor for his thoughts and comments over many months of working on the project.
We would also like to thank Raymundo Bautista, Harm Derksen, Christof Geiss, Lutz Hille, Birge Huisgen-Zimmermann, Jan Schr\"oer, and Dieter Vossieck for helpful conversations.  Finally, we thank  a referee whose comments lead to great improvement and simplification of the paper.

\section{Background}\label{sect:background}
\subsection{Module varieties}\label{sect:thebasics}
Up to Morita equivalence, any finite-dimensional, associative $K$-algebra $A$ can be viewed as a bound quiver algebra; that is, there exists a quiver $Q$ (uniquely determined by $A$) and an ideal $I$ in the path algebra $KQ$ such that $A \simeq KQ/I$.
Therefore, throughout the paper, we implicitly assume that our algebras are given by such a presentation. We say that $A$ is a \emph{triangular} algebra if $Q$ has no oriented cycles.  We refer to the text \cite{assemetal} for background on quivers and finite-dimensional algebras.

We write $Q_0$ for the set of vertices of a quiver $Q$, and $Q_1$ for its arrow set.  A dimension vector $\dd\colon Q_0 \to \NN$ for $A$ is a choice of a non-negative integer at each vertex of $Q$.
The affine \key{module variety} $\module(A,\dd)$ 
parametrizes the $A$-modules of dimension vector $\dd$ along with a fixed basis.  We represent a point of this variety by a collection of matrices associated to the arrows of $Q$ which satisfy the relations in $I$.  Writing $ta$ and $ha$ for the tail and head of an arrow $a$, we have
\begin{equation}
\module(A,\dd) = \{M \in \prod_{a \in Q_1} \Mat_{\dd(ha)\times \dd(ta)}(K) \mid M(r)=0, \forall r \in I\}.
\end{equation}
The orbits in $\module(A,\dd)$  of the base change group $\GL(\dd) = \prod \GL(\dd(i))$ are in one-to-one correspondence with the isomorphism classes of the $\dd$-dimensional $A$-modules.  See, for example, \cite{MR724715} for background on module varieties.

In general, $\module(A, \dd)$ does not have to be irreducible. Let $C$ be an irreducible component of $\module(A, \dd)$. We say that $C$ is \emph{indecomposable} if $C$ has a dense open subset of indecomposable modules.
As shown by de la Pe{\~n}a in \cite[\S1.3]{MR1113958} and Crawley-Boevey and Schr{\"o}er in \cite[Theorem~1.1]{MR1944812}, any irreducible component $C \subseteq \module(A, \dd)$ satisfies a Krull-Schmidt type decomposition
\begin{equation}
C=\overline{C_1\oplus \ldots \oplus C_t}
\end{equation}
for some indecomposable irreducible components $C_i\subseteq \module(A,\dd_i)$ with $\sum \dd_i = \dd$. We call $C=\overline{C_1\oplus \ldots \oplus C_t}$ \key{the generic decomposition of $C$}.

\begin{definition}
An algebra $A$ is said to have the \key{dense orbit property} (write $A$ is DO) if each irreducible component of each of its module varieties has a dense orbit.
\end{definition}

\begin{remark}\label{do-components} Using the generic decomposition, it is easy to see that an algebra is DO if and only if each of its indecomposable irreducible components has a dense orbit.
\end{remark}

\subsection{Dense orbits and self-extensions}\label{sect:doext}
The module varieties we study are the set of $K$-points of the module schemes $\underline{\module}(A, \dd)$, which only appear in the remarks of this subsection.
 The Artin-Voigt Lemma states that an $A$-module $M$ satisfies $\Ext^1_A(M,M)=0$ if and only if the orbit of $M$ is scheme-theoretically open in $\underline{\module}(A, \dd)$ \cite[II.3.5]{MR0486168}. This seems to be the only known representation-theoretic condition implying that a module has a dense orbit.

Note that if $A$ is representation finite, then it is trivially DO since each $\module(A,\dd)$ has only finitely many orbits.  To find non-trivial examples, one might look for algebras having an abundance of modules with self-extensions.  It would be interesting to know if there exist algebras having a module without self extensions in each component, which are not representation finite.
If one assumes that \emph{every} indecomposable $M$ satisfies $\Ext^1_A(M,M)=0$, then $A$ is already representation finite \cite[Lemma~4]{MR1106345}.  

Of course, vanishing self-extensions does not completely characterize modules with dense orbits, as one can see with an easy (even commutative) example: let $A=K[x]/(x^2)$, so the variety of modules of dimension 1 has only one point, thus a dense orbit.
However, the corresponding trivial module admits a nontrivial self extension:
\begin{equation}
0 \to K \to K[x]/(x^2) \to K \to 0.
\end{equation}
The issue is that if the scheme structure on a component of $\module(A, \dd)$ is not generically reduced, as in this example, then an orbit can be topologically open and dense without being scheme-theoretically open.

\subsection{Weight spaces of semi-invariants and moduli spaces of modules}\label{sect:moduli}
Consider the action of $\SL(\dd):=\prod_{i \in Q_0}\SL(\dd(i),K)$ on the module variety $\module(A,\dd)$, and the induced action on its coordinate ring.
The resulting ring of \emph{semi-invariants}  has a weight space decomposition over the group $X^\star(\GL(\dd))$ of rational characters of $\GL(\dd)$:
\begin{equation}
\SI(A,\dd):=K[\module(A,\dd)]^{\SL(\dd)} = \bigoplus_{\chi \in X^\star(\GL(\dd))}\SI(A,\dd)_{\chi}.
\end{equation}
Each of these summands 
\begin{equation}
\SI(A,\dd)_{\chi}=\lbrace f \in K[\module(A,\dd)] \mid g f= \chi(g)f \text{~for all~}g \in \GL(\dd)\rbrace
\end{equation} 
is a vector space called \key{the space of semi-invariants on} $\module(A,\dd)$ \key{of weight $\chi$}. Note that any $\theta \in \ZZ^{Q_0}$ defines a rational character $\chi_{\theta}:\GL(\dd) \to K^*$ by 
\begin{equation}
\chi_{\theta}((g(i))_{i \in Q_0})=\prod_{i \in Q_0}\det g(i)^{\theta(i)}.
\end{equation}
In this way, we identify $\ZZ ^{Q_0}$ with $X^\star(\GL(\dd))$, assuming that $\dd$ is a
sincere dimension vector. We also refer to the rational characters of $\GL(\dd)$ as (integral) weights of $A$ (or $Q$).

For an irreducible component $C \subseteq \module(A,\dd)$, we similarly define the ring of semi-invariants $\SI(C):=K[C]^{\SL(\dd)}$, and the space $\SI(C)_{\theta}$ of semi-invariants on $C$ of weight $\theta \in \ZZ^{Q_0}$.

\begin{definition}
An algebra $A$ is said to have the \key{multiplicity-free property} (write $A$ is MF) if the algebra of semi-invariants on each irreducible component $C$ of each of its module varieties is multiplicity-free; that is, $\dim_K \SI(C)_{\theta} \leq 1$ for all $\theta \in \ZZ^{Q_0}$.
\end{definition}

\begin{remark} Note that for an irreducible component $C \subseteq \module(A,\dd)$, the affine categorical quotient $C//\SL(\dd)$ is a $\GL(\dd)/\SL(\dd)$-variety and $\dim_K \SI(C)_{\theta}$ is the multiplicity of the 1-dimensional irreducible representations of the torus $\GL(\dd)/\SL(\dd)$ of weight $\theta$ in the coordinate ring $\SI(C)$ of $C//\SL(\dd)$. With this observation in mind, an algebra $A$ is MF if and only if for each dimension vector $\dd$ and irreducible component $C \subseteq \module(A,\dd)$, $C//\SL(\dd)$ is a multiplicity-free $\GL(\dd)/\SL(\dd)$-variety or, equivalently, $C//\SL(\dd)$ contains a dense $\GL(\dd)/\SL(\dd)$-orbit. 
\end{remark}

In our study of MF algebras, we will use of some of the main results on moduli spaces of representations due to King \cite{Kmodulireps}.  An $A$-module $M$ is said to be \emph{$\theta$-semi-stable} if $\theta(\ddim M)=0$ and $\theta(\ddim M')\leq 0$ for all submodules $M' \leq M$. We say that $M$ is \emph{$\theta$-stable} if $M$ is non-zero, $\theta(\ddim M)=0$, and $\theta(\ddim M')<0$ for all submodules $0 \neq M' < M$. Now, consider the (possibly empty) open subsets
\begin{equation}
\module(A,\dd)^{ss}_{\theta}=\{M \in \module(A,\dd)\mid M \text{~is~}
\text{$\theta$-semi-stable}\}
\end{equation}
and 
\begin{equation}
\module(A,\dd)^s_{\theta}=\{M \in \module(A,\dd)\mid M \text{~is~}
\text{$\theta$-stable}\}
\end{equation}
of $\dd$-dimensional $\theta$(-semi)-stable $A$-modules.

Using methods from Geometric Invariant Theory, King showed in \cite{Kmodulireps} that the projective variety
\begin{equation}
\M(A,\dd)^{ss}_{\theta}:=\Proj(\bigoplus_{n \geq 0}\SI(A,\dd)_{n\theta})
\end{equation}
is a GIT-quotient of $\module(A,\dd)^{ss}_{\theta}$ by the action of $\PGL(\dd)$. Here,  $\PGL(\dd)=\GL(\dd)/T_1$ where $T_1=\{(\lambda \Id_{\dd(i)})_{i \in Q_0} \mid \lambda \in K^*\} \leq \GL(\dd)$. Note that there is a well-defined action of $\PGL(\dd)$ on $\module(A,\dd)$ since $T_1$ acts trivially on $\module(A,\dd)$. We say that $\dd$ is a \emph{$\theta$-semi-stable dimension vector} if $\module(A,\dd)^{ss}_{\theta} \neq \emptyset$. 

It was proved in \cite[Proposition~4.2]{Kmodulireps} that the (closed) points of $\M(A,\dd)^{ss}_{\theta}$ are in one-to-one correspondence with the isomorphism classes of those modules in $\module(A,\dd)^{ss}_{\theta}$ that can be written as direct sums of $\theta$-stable modules. We call such $A$-modules \key{$\theta$-polystable}.

For an irreducible component $C \subseteq \module(A,\dd)$, we similarly define $C^{ss}_{\theta}, C^s_{\theta}$, and $\M(C)^{ss}_{\theta}$. One then has that the points of $\M(C)^{ss}_{\theta}$ are in one-to-one correspondence with the isomorphism classes of $\theta$-polystable modules in $C$.

In general, it is difficult to describe or construct stable modules. The lemma below, which will be used in proving Theorem \ref{thm:tamemf}, identifies modules $M$ which are stable with respect to a canonical weight associated to $M$. Specifically, for an arbitrary $A$-module $M$, we define the weight $\theta^M$ by 
\begin{equation}
\theta^M(\ddim X)=\dim_K \Hom_A(P_0,X)-\dim_K \Hom_A(P_1,X),
\end{equation}
where $P_1 {\buildrel f \over \to} P_0 \to M \to 0$ is a minimal projective presentation of $M$ in $\module(A)$ and $X$ is an $A$-module (see \cite{DomokosFFT}). Equivalently, we can write
\begin{equation}
\theta^M(\ddim X)=\dim_K \Hom_A(M,X)-\dim_K \Hom_A(X,\tau M),
\end{equation}
where $\tau M$ is the Auslander-Reiten translation of $M$ (for more details, see \cite[\S IV.2]{assemetal}).
Recall that an $A$-module $M$ is said to be \key{homogeneous} if $M \simeq \tau M$, and is said to be \key{Schur} if $\End_A(M) \simeq K$.
 
\begin{lemma}\label{lemma:stable-Schur-homogeneous} If $M$ is a homogeneous Schur $A$-module, then $M$ is $\theta^M$-stable.
\end{lemma}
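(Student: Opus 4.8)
The plan is to work with the second description of the canonical weight, $\theta^M(\ddim X)=\dim_K\Hom_A(M,X)-\dim_K\Hom_A(X,\tau M)$, and to exploit the hypotheses to turn the stability inequalities into statements about $\Hom$-spaces between $M$ and its submodules. First I would record the two ``easy'' parts of the definition of $\theta^M$-stability: since $M$ is Schur we have $\End_A(M)\simeq K\neq 0$, so $M\neq 0$; and since $M\simeq\tau M$ we may rewrite
\begin{equation}
\theta^M(\ddim X)=\dim_K\Hom_A(M,X)-\dim_K\Hom_A(X,M)
\end{equation}
for every $A$-module $X$, whence $\theta^M(\ddim M)=\dim_K\End_A(M)-\dim_K\End_A(M)=0$.

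The heart of the argument is the claim that $\Hom_A(M,M')=0$ for every proper submodule $M'\subsetneq M$. To prove it, take any $f\colon M\to M'$ and compose with the inclusion $\iota\colon M'\hookrightarrow M$ to obtain $\iota f\in\End_A(M)$. Because $M$ is Schur, $\iota f=\lambda\,\Id_M$ for some $\lambda\in K$; but $\Ima(\iota f)\subseteq M'\subsetneq M$, so $\lambda\neq 0$ would force $M\subseteq M'$, a contradiction. Hence $\lambda=0$, and since $\iota$ is injective, $f=0$. On the other hand, for any nonzero submodule $M'$ the inclusion itself is a nonzero element of $\Hom_A(M',M)$, so $\dim_K\Hom_A(M',M)\geq 1$. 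Combining these with the displayed formula, for every submodule $0\neq M'\subsetneq M$ we get
\begin{equation}
\theta^M(\ddim M')=0-\dim_K\Hom_A(M',M)\leq -1<0,
\end{equation}
which is exactly the remaining condition for $\theta^M$-stability.

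I do not expect a real obstacle here; the computation is short. The one step that requires an idea rather than bookkeeping is the vanishing $\Hom_A(M,M')=0$, and the only subtlety there is noting that a scalar endomorphism of $M$ with image inside a \emph{proper} submodule must be zero. It is worth emphasizing where each hypothesis enters: homogeneity is used solely to replace $\Hom_A(X,\tau M)$ by $\Hom_A(X,M)$, and the Schur property is used solely in that vanishing statement. (One could try to argue directly from the minimal projective presentation and the first formula for $\theta^M$, but passing through the Auslander--Reiten translate makes the identity $\theta^M(\ddim M)=0$ and the submodule inequality transparent, so I would take that route.)
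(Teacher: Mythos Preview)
Your proof is correct and follows essentially the same route as the paper's: use homogeneity to replace $\tau M$ by $M$ so that $\theta^M(\ddim M)=0$, then for a proper nonzero submodule $M'$ use the Schur property to get $\Hom_A(M,M')=0$ and the inclusion to get $\Hom_A(M',M)\neq 0$. The paper asserts the vanishing of $\Hom_A(M,M')$ without spelling out the scalar-endomorphism argument you give, but the content is identical.
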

\begin{proof} Since $M$ is homogeneous, we have that $\theta^{M}(\ddim M)=0$. Next, using that $M$ is also Schur, we have that for any proper $A$-submodule $0\neq M' \subset M$, $\Hom_A(M,M')=0$ and $\dim_K \Hom_A(M',\tau M)=\dim_K \Hom_A(M',M)>0$. So, $\theta^M(\ddim M')<0$ for all proper submodules $M'$ of $M$.
\end{proof}

\begin{remark} The property that a module is Schur does not guarantee the existence of a weight with respect to which the module becomes (semi-)stable (see \cite[\S~3.2]{Reineke:2008fk} for an example).
\end{remark}

\section{General results and algebras with a preprojective component}\label{sect:general}
Since we are interested in when the DO property, MF property, and representation finite properties coincide, we start by proving some general facts about these classes of algebras.  The following property provides a bridge in one direction.

\begin{definition}\label{def:schurfin}
We say that $A$ is \key{Schur-representation-finite} if, for each dimension vector $\dd$ of $A$, there are finitely many $\dd$-dimensional Schur $A$-modules up to isomorphism. 
\end{definition}
The larger class of brick-tame bocses was introduced and studied by Bodnarchuk-Drozd in \cite{MR2609188}; it includes bocses that appear in the study of vector bundles on degenerations of elliptic curves and coadjoint actions of linear groups.

\begin{lemma}\label{lem:infschur} If $A$ has the DO property, then $A$ is Schur-representation-finite.
\end{lemma}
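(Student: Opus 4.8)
The plan is to prove the contrapositive: assuming $A$ is \emph{not} Schur-representation-finite, I will produce an irreducible component of some module variety $\module(A,\dd)$ that fails to have a dense orbit, contradicting the DO property. By hypothesis there is a dimension vector $\dd$ admitting infinitely many pairwise non-isomorphic $\dd$-dimensional Schur modules. First I would recall that a Schur module $M$ (meaning $\End_A(M) \simeq K$) is in particular indecomposable and has a one-dimensional stabilizer $T_1 \leq \GL(\dd)$, so its $\GL(\dd)$-orbit has dimension $\dim \GL(\dd) - 1$. The key observation is that all these orbits have the \emph{same} (maximal possible) dimension, yet there are infinitely many of them sitting inside the finitely many irreducible components of $\module(A,\dd)$.

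The heart of the argument is a pigeonhole/dimension count. Since $\module(A,\dd)$ has only finitely many irreducible components, some component $C$ contains infinitely many of these Schur orbits in its dense open subset (i.e. not contained in the closed locus where $C$ meets other components). If $C$ had a dense orbit $\mathcal{O}$, then $\mathcal{O}$ would be open in $C$, hence $\dim \mathcal{O} = \dim C$; but each of the infinitely many Schur orbits $\mathcal{O}_i$ in $C$ would then have $\dim \mathcal{O}_i < \dim C = \dim \mathcal{O}$ (a non-dense orbit in an irreducible variety has strictly smaller dimension). Meanwhile, I would want to compare $\dim \mathcal{O}$ with $\dim \mathcal{O}_i = \dim\GL(\dd) - 1$. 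One clean way: the dense orbit $\mathcal{O}$ must also consist of indecomposable modules (since $C$ is then an indecomposable component), and in fact by the Artin--Voigt Lemma a dense orbit that is scheme-theoretically open forces $\Ext^1_A(M,M)=0$ — but one must be careful, as the excerpt itself warns, that topological density need not give scheme-theoretic openness. So instead I would argue purely on orbit dimensions: a module $N$ with dense orbit in $C$ satisfies $\dim \mathcal{O}_N = \dim C \geq \dim \mathcal{O}_{M_i} = \dim \GL(\dd) - 1$, and since $\dim \GL(\dd) - 1$ is already the maximum orbit dimension in $\module(A,\dd)$ (no orbit can have dimension $\dim\GL(\dd)$ because $T_1$ always acts trivially), we get $\dim C = \dim \GL(\dd)-1$. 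Then every Schur orbit $\mathcal{O}_{M_i}$ inside $C$ has $\dim \mathcal{O}_{M_i} = \dim C$, which for an irreducible variety forces $\mathcal{O}_{M_i}$ to be dense in $C$ — but a variety cannot contain two distinct dense orbits, so at most one $M_i$ lies in $C$.

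Combining: each irreducible component of $\module(A,\dd)$ contains at most one of our infinitely many Schur modules, contradicting finiteness of the set of components. Hence some component has no dense orbit and $A$ is not DO. The step I expect to be the main obstacle is the bookkeeping around \emph{which} component each Schur module "belongs to": a Schur module could a priori lie in the intersection of several components, and its orbit could be dense in one of them only if that component has dimension $\dim\GL(\dd)-1$. I would handle this by noting that if \emph{every} component containing $M_i$ has dimension $< \dim\GL(\dd)-1$ we already have an immediate contradiction (the orbit $\mathcal{O}_{M_i}$, of dimension $\dim\GL(\dd)-1$, cannot fit inside any component of smaller dimension), so each $M_i$ lies in some component of dimension exactly $\dim\GL(\dd)-1$ in whose smooth locus its orbit is open and dense; then the "at most one dense orbit per variety" principle finishes the pigeonhole as above. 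A secondary subtlety worth a remark is that we only used that Schur modules have $1$-dimensional stabilizer, not the full force of $\End_A(M)\simeq K$ beyond that — which is exactly what makes the orbit-dimension maximal and the argument go through.
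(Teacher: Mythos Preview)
Your argument is correct and at its core is the same as the paper's, just phrased in the dual language of orbit dimensions rather than endomorphism-ring dimensions (recall $\dim \mathcal{O}_M = \dim \GL(\dd) - \dim_K \End_A(M)$). The paper compresses everything into two lines: once some component $C$ contains infinitely many Schur modules, it invokes the standard fact that $\dim_K \End_A(-)$ \emph{strictly increases} when passing from an orbit to its boundary; hence if $C$ had a dense orbit $\GL(\dd)M_0$, every other module in $C$ would satisfy $\dim_K \End > \dim_K \End(M_0) \geq 1$, so no second Schur module could appear. This bypasses all of your bookkeeping about which components a given $M_i$ belongs to, comparing $\dim C$ with $\dim \GL(\dd)-1$, and the ``smooth locus'' aside (which is in any case not needed: an irreducible closed subset of dimension equal to $\dim C$ inside an irreducible $C$ is already all of $C$).
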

\begin{proof} If $A$ is not Schur-representation-finite, then there must be an irreducible component of some $\module(A,\dd)$ with infinitely many Schur modules.  But there can only be one dense orbit in a component, and dimension of endomorphism rings strictly increases when moving from an orbit to its boundary, a contradiction.
\end{proof}  

\begin{prop}\label{prop:schurfin-mf} A Schur-representation-finite algebra $A$ is MF.   In particular, if an algebra has the DO property, then it also has the MF property.
\end{prop}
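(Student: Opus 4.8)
The plan is to show that for any dimension vector $\dd$ and any irreducible component $C \subseteq \module(A,\dd)$, the ring $\SI(C) = K[C]^{\SL(\dd)}$ is multiplicity-free, i.e.\ $\dim_K \SI(C)_\theta \leq 1$ for every weight $\theta \in \ZZ^{Q_0}$. First I would recall the moduli-space interpretation from Section~\ref{sect:moduli}: a nonzero semi-invariant $f \in \SI(C)_\theta$ vanishes on the complement of $C^{ss}_\theta$ in $C$, so $\SI(C)_\theta \neq 0$ forces $\dd$ to be a $\theta$-semi-stable dimension vector and $C^{ss}_\theta \neq \emptyset$. Moreover $\SI(C)_{n\theta}$ for $n \geq 0$ is the space of sections of the natural ample line bundle $\mathcal{L}_\theta$ on the projective GIT quotient $\M(C)^{ss}_\theta = \Proj\big(\bigoplus_{n \geq 0}\SI(C)_{n\theta}\big)$, whose closed points correspond to the $\theta$-polystable modules in $C$.

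The key observation is that Schur-representation-finiteness forces $\M(C)^{ss}_\theta$ to be a single point, hence the section ring to be (a quotient of) a polynomial ring in one variable. Concretely: a general point of the irreducible variety $\M(C)^{ss}_\theta$ corresponds to a $\theta$-polystable module $M = \bigoplus_i S_i^{m_i}$ with the $S_i$ pairwise non-isomorphic $\theta$-stable modules. Each $\theta$-stable module is Schur (its endomorphism ring is a finite-dimensional division algebra over $K$, hence $K$ since $K$ is algebraically closed), so each $S_i$ lies in some $\module(A,\dd_i)$ with $\dd_i \leq \dd$. Since $A$ is Schur-representation-finite, for each such $\dd_i$ there are only finitely many Schur modules up to isomorphism, so there are only finitely many $\theta$-stable modules of dimension $\leq \dd$ altogether, and therefore only finitely many isomorphism classes of $\theta$-polystable modules of dimension $\dd$. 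Thus $\M(C)^{ss}_\theta$ is a finite set; being irreducible (as the image of the irreducible $C^{ss}_\theta$ under a quotient map, or via the generic-decomposition structure), it is a single reduced point. Hence $\dim_K \SI(C)_{n\theta} \leq 1$ for all $n \geq 0$, and in particular $\dim_K \SI(C)_\theta \leq 1$. Since $\theta$ was arbitrary, $A$ is MF. The final sentence then follows immediately from Lemma~\ref{lem:infschur}.

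I would expect the main obstacle to be the step asserting that $\M(C)^{ss}_\theta$ is irreducible (equivalently, that a single $\theta$-polystable isomorphism type occurs generically on $C^{ss}_\theta$). The cleanest route is: the restriction map $C^{ss}_\theta \to \M(C)^{ss}_\theta$ is a surjective morphism with $C^{ss}_\theta$ open in the irreducible variety $C$, so the target is irreducible; a finite irreducible variety over $K$ is a single point. One must also handle the degenerate cases: if $\SI(C)_\theta = 0$ there is nothing to prove, and if $C^{ss}_\theta$ is empty for all $n\theta$ then the section ring is trivial. A minor technical point worth noting is that the quotient $C^{ss}_\theta \to \M(C)^{ss}_\theta$ a priori has image only a constructible subset, but since the latter is finite this causes no trouble — its closure, still finite and irreducible, is a point, and that point is in the image. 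With these wrinkles addressed the argument is short.
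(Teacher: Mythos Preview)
Your proposal is correct and follows essentially the same approach as the paper: both arguments use that $\theta$-polystable modules are direct sums of $\theta$-stable (hence Schur) modules, so Schur-representation-finiteness forces $\M(C)^{ss}_\theta$ to be finite, and then irreducibility of $C$ forces it to be a single point, giving $\dim_K \SI(C)_{n\theta}\leq 1$. The only cosmetic difference is that the paper phrases the last step via the injection $\SI(C)_{n\theta}\hookrightarrow \SI(C)_{(n+1)\theta}$ given by multiplication by a fixed $f_0\in \SI(C)_\theta$ (using that $K[C]$ is a domain) together with zero-dimensionality of the moduli space, whereas you argue directly that the surjection $C^{ss}_\theta\twoheadrightarrow \M(C)^{ss}_\theta$ has irreducible source and hence irreducible, finite, one-point target.
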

\begin{proof}
Let $\dd$ be a dimension vector, $C \subseteq \module(A,\dd)$ an irreducible component, and $\theta$ a weight such that $\dim_K \SI(C)_{\theta} >0$. First, we note that $\dim_K \SI(C)_{n \theta} \leq \dim_K \SI(C)_{(n+1)\theta}$ for all $n \geq 1$. Indeed, fixing a non-zero semi-invariant $f_0 \in \SI(C)_{\theta}$, the map 
\begin{equation}
\begin{split}
\SI(C)_{n \theta} &\to \SI(C)_{(n+1)\theta}\\
f &\mapsto f_0 \cdot f
\end{split}\end{equation}
is an injective linear map since $K[C]$ is a domain. The desired inequality now follows. 

Retaining the notation from Section \ref{sect:moduli}, recall that the (closed) points of $\M(C)^{ss}_{\theta}$ are in one-to-one correspondence with the $\theta$-polystable modules in $C^{ss}_{\theta}$. Furthermore, any $\theta$-polystable $A$-module is a finite direct sum of Schur modules since any $\theta$-stable module is Schur.  So the moduli space $\M(C)^{ss}_{\theta}$ is zero dimensional and since the dimensions of the graded pieces of the algebra defining $\M(C)^{ss}_{\theta}$ weakly increase, we conclude that $\dim_K \SI(C)_{\theta}=1$. 
\end{proof}




Now we are almost ready to prove our first main result.  The following lemma allows us to reduce to the minimal representation-infinite case.

\begin{lemma}\label{lem:mfquot}
Any quotient of an MF bound quiver algebra is MF.
\end{lemma}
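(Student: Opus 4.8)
The plan is to realize the module varieties of a quotient as closed, $\GL(\dd)$-stable subvarieties of the module varieties of the original algebra, and then to transport multiplicity-freeness along the restriction-of-functions map, using that taking invariants under a reductive group is exact in characteristic zero.

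Write $A = KQ/I$ and let $B$ be a quotient of $A$. Taking $I' \subseteq KQ$ to be the preimage of the kernel of $A \onto B$, we have $I \subseteq I'$ and $B = KQ/I'$, so for every dimension vector $\dd$ of $B$ the variety $\module(B,\dd)$ is the closed subvariety of $\module(A,\dd)$ cut out by the additional relations in $I'$, and this inclusion is $\GL(\dd)$-stable. Fix $\dd$ and an irreducible component $C'$ of $\module(B,\dd)$. As an irreducible closed subset of $\module(A,\dd)$, $C'$ is contained in some irreducible component $C$ of $\module(A,\dd)$; since $\GL(\dd)$ is connected and $\module(B,\dd)$, $\module(A,\dd)$ are $\GL(\dd)$-stable, both $C$ and $C'$ are $\GL(\dd)$-stable.

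The restriction map $\rho\colon K[C] \to K[C']$ is then a surjective $\GL(\dd)$-equivariant homomorphism of $K$-algebras. Because $\charac K = 0$ and $\SL(\dd)$ is reductive, taking $\SL(\dd)$-invariants is exact, so $\rho$ induces a surjection $\SI(C) = K[C]^{\SL(\dd)} \onto K[C']^{\SL(\dd)} = \SI(C')$; this surjection is automatically equivariant for the residual $\GL(\dd)/\SL(\dd)$-action, i.e.\ it is graded for the weight decomposition, hence restricts to a surjection $\SI(C)_\theta \onto \SI(C')_\theta$ for every weight $\theta \in \ZZ^{Q_0}$. Therefore $\dim_K \SI(C')_\theta \leq \dim_K \SI(C)_\theta \leq 1$, the last inequality because $A$ is MF. Since $\dd$ and $C'$ were arbitrary, $B$ is MF.

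The argument has no genuinely difficult step; the only points requiring care are the identification of $\module(B,\dd)$ with a $\GL(\dd)$-stable closed subvariety of $\module(A,\dd)$ (so that components of the former sit inside components of the latter), and the surjectivity of $\rho$ on semi-invariants, which is precisely where the characteristic-zero hypothesis is used (one may instead argue directly with the Reynolds operator $\R\colon K[C] \to \SI(C)$, which is $\GL(\dd)$-equivariant and preserves weight spaces, so $\rho(\R(f)) = \R(\rho(f))$ recovers any given $\bar f \in \SI(C')_\theta$ from a preimage $f$).
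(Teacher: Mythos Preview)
Your proof is correct and follows essentially the same approach as the paper: embed each irreducible component of $\module(B,\dd)$ into a component of $\module(A,\dd)$, and use that the restriction map on coordinate rings induces a weight-preserving surjection on $\SL(\dd)$-invariants to bound the multiplicities. The only difference is that where the paper cites a reference for this surjectivity, you supply the argument directly via exactness of $(-)^{\SL(\dd)}$ in characteristic zero (equivalently, the Reynolds operator).
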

\begin{proof}
Let $A$ be a MF algebra, $I$ an ideal of $A$, and $\dd$ a dimension vector of $A$. Then, any irreducible component $C \subseteq \module(A/I,\dd)$ is embedded ($\GL(\dd)$-equivariantly) in an irreducible component $C' \subseteq \module(A,\dd)$. We know from invariant theory \cite[Corollary 2.2.9]{MR1918599} that the above embedding gives rise to a surjective map at the level of $\SL(\dd)$-invariant rings which preserves weight spaces.  So for any weight $\theta$, we have that $\dim_K \SI(C)_{\theta} \leq \dim_K \SI(C')_{\theta} \leq 1$.
\end{proof}

Recall that an algebra $A$ is said to \emph{admit a preprojective component} if its Auslander-Reiten quiver has an acyclic connected component in which every indecomposable is of the form $\tau^{-n} P$ for some projective $P$.  For example, hereditary algebras $A=KQ$ always admit a preprojective component.  See \cite[\S~VIII.2]{assemetal} for more details (where the terminology ``postprojective'' is used.)

\begin{theorem}\label{thm-preproj-comp} Let $A$ be a connected, bound quiver algebra with a prepojective component. Then, the following properties are equivalent:
\begin{enumerate}
\renewcommand{\theenumi}{\arabic{enumi}}
\item $A$ is representation-finite;

\item for each dimension vector $\dd$ of $A$, the group $\GL(\dd)$ acts on each irreducible component of $\module(A,\dd)$ with a dense orbit;

\item for each dimension vector $\dd$ of $A$ and each irreducible component $C$ of $\module(A,\dd)$, the algebra of semi-invariants $K[C]^{\SL(\dd)}$ is multiplicity-free. 
\end{enumerate}
\end{theorem}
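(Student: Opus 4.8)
The plan is to prove the cycle $(1)\Rightarrow(2)\Rightarrow(3)\Rightarrow(1)$. The implication $(1)\Rightarrow(2)$ is immediate: if $A$ is representation-finite then each $\module(A,\dd)$ is a finite union of $\GL(\dd)$-orbits, so every irreducible component is the closure of a single orbit. The implication $(2)\Rightarrow(3)$ is exactly Proposition \ref{prop:schurfin-mf}. Hence the whole content lies in $(3)\Rightarrow(1)$, that is, in showing (contrapositively) that a representation-infinite algebra $A$ admitting a preprojective component cannot be MF. Note the hypothesis on the preprojective component must genuinely be used here, since representation-infinite MF algebras exist in general.

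First I would reduce to a single, well-understood class of algebras. By Lemma \ref{lem:mfquot} it is enough to produce one representation-infinite quotient of $A$ that is not MF. Since $A$ is representation-infinite and admits a preprojective component, the Happel--Vossieck classification of minimal representation-infinite algebras with a preprojective component applies and yields a quotient $B$ of $A$ (obtained as a full convex subcategory, hence of the form $A/\langle e_i \mid i\notin J\rangle$ for a convex set $J$ of vertices) which is a tame concealed algebra. So it suffices to show that no tame concealed algebra is MF.

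For this I would combine the structure theory of tame concealed algebras with Lemma \ref{lemma:stable-Schur-homogeneous}. Write $B=\End_H(T)^{\mathrm{op}}$ for a tame hereditary algebra $H$ and a preprojective tilting module $T$. The homogeneous simple regular $H$-modules $S_\lambda$, with $\lambda$ ranging over a cofinite subset of $\PP^1$, are pairwise non-isomorphic bricks satisfying $\tau_H S_\lambda\cong S_\lambda$ and all of dimension vector the null root $\delta$; since they lie in the torsion class of $T$, applying $\Hom_H(T,-)$ produces pairwise non-isomorphic $B$-modules $R_\lambda$ that are again bricks with $\tau_B R_\lambda\cong R_\lambda$, all of one fixed dimension vector $\mathbf d$. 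By Lemma \ref{lemma:stable-Schur-homogeneous} each $R_\lambda$ is $\theta^{R_\lambda}$-stable, and upper semicontinuity of Hom-dimensions along the family $\{R_\lambda\}$ shows that the weight $\theta:=\theta^{R_\lambda}$ is the same for all $\lambda$ in a dense open set of parameters. As $\module(B,\mathbf d)$ has only finitely many irreducible components, infinitely many of these $\theta$-stable modules lie in a single component $C$; the corresponding points of $\M(C)^{ss}_\theta$ are pairwise distinct, so $\M(C)^{ss}_\theta=\Proj\bigl(\bigoplus_{n\ge0}\SI(C)_{n\theta}\bigr)$ is infinite and therefore of positive dimension. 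But a graded domain all of whose positive-degree components are at most one-dimensional is a graded subalgebra of $K[t]$ and has zero-dimensional $\Proj$; hence $\dim_K\SI(C)_{n\theta}\ge 2$ for some $n\ge1$, so $B$ — and therefore $A$ — is not MF.

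The main obstacle is imported rather than created: it is the Happel--Vossieck classification and the surrounding structure theory of tame concealed algebras (that they are tilted from tame hereditary algebras and carry a one-parameter family of homogeneous bricks of constant dimension vector). The genuinely new content is the short argument above assembling Lemma \ref{lem:mfquot}, Lemma \ref{lemma:stable-Schur-homogeneous}, and the elementary fact about $\Proj$ of a thin graded domain. The only point needing a little care is the $\lambda$-independence of the canonical weight $\theta^{R_\lambda}$, but the generic version used above follows directly from upper semicontinuity of $\dim\Hom$ in families.
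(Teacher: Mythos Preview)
Your proof is correct and follows the paper's strategy: both reduce via Lemma \ref{lem:mfquot} and the Happel--Vossieck theorem to showing that a tame concealed algebra $B$ is not MF, and both do this by exhibiting a positive-dimensional moduli space. The only difference is in this last step. The paper simply quotes the known fact that for the dimension vector $\hh$ at the mouth of a homogeneous tube, $\module(B,\hh)$ is irreducible and $\M(B,\hh)^{ss}_\theta\cong\PP^1$ for a suitable $\theta$ (citing \cite{MR3037894}), which immediately forces some weight space to have dimension at least $2$. You instead rebuild this from the structure theory of tame concealed algebras together with Lemma \ref{lemma:stable-Schur-homogeneous}; this is longer but more self-contained, and it is pleasant that it reuses Lemma \ref{lemma:stable-Schur-homogeneous}, which in the paper is only invoked later for Theorem \ref{thm:tamemf}. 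Your argument also does not need the irreducibility of $\module(B,\hh)$, since you just pick a component containing infinitely many of the $R_\lambda$. One small simplification: instead of arguing that $\theta^{R_\lambda}$ is generically constant in $\lambda$, you may fix $\theta_0:=\theta^{R_{\lambda_0}}$ for a single $\lambda_0$ and use that $\theta_0$-stability is an open condition on $\module(B,\dd)$ to conclude directly that infinitely many $R_\lambda$ are $\theta_0$-stable.
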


\begin{proof}We have seen that the implications $(1)\Longrightarrow (2) \Longrightarrow (3)$ hold true for arbitrary bound quiver algebras. 

It remains to prove the implication $(3) \Longrightarrow (1)$. Assume to the contrary that the MF algebra $A$ is representation-infinite. It follows from the work of Happel and Vossieck that any connected algebra admitting a prepojective component has a tame concealed algebra as a quotient (see \cite{MR701205} or \cite[Theorem XIV.3.1]{MR2360503}).  This result combined with Lemma \ref{lem:mfquot} tells us that $A$ has a quotient $B$ which is an MF tame concealed algebra. Denote by $\hh$ the dimension vector of an indecomposable $B$-module lying at the mouth of a homogeneous tube of $B$. It is well-known that $\module(B,\hh)$ is irreducible and that there is always an integral weight $\theta$ of $B$ such that $\M(B,\hh)^{ss}_{\theta} \simeq \PP^1$ (see for example \cite{MR3037894}). In particular, $\SI(B,\hh)$ is not multiplicty-free, a contradiction.
\end{proof}


\section{Representation-infinite DO algebras}\label{sect:ctreg}
\subsection{Example of a representation-infinite DO algebra}
Our example of a representation-infinite DO algebra works in arbitrary characteristic and is given by the following quiver with relations.
\begin{equation}\label{eq:DO}
\vcenter{\hbox{
\begin{tikzpicture}[point/.style={shape=circle,fill=black,scale=.5pt,outer sep=3pt},>=latex]
\node[point,label={below:$1$}] (1) at (0,0) {} ;
\node[point,label={below:$2$}] (2) at (2,0) {} edge[in=45,out=-45,loop] node[right] {$b$} ();
\path[->] (1) edge node[above] {$a$} (2) ;
\end{tikzpicture} }}
\qquad
b^n = b^2 a = 0, n \in \NN
\end{equation}
We can think of a representation of $\Lambda$ as a nilpotent operator on a vector space, along with a distinguished subspace. For $n=6$, the algebra is tame representation-infinite with distributive ideal lattice (see, for example \cite[p.~242]{MR799266}). For $n > 6$, the algebra is wild \cite{MR949903}. Fix $n$ and denote this algebra by $\Lambda$ throughout this section.

\begin{theorem}\label{thm-example-DO} 
The algebra $\Lambda$ is DO for all $n$.
\end{theorem}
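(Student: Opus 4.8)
By Remark \ref{do-components}, it suffices to show that every indecomposable irreducible component $C\subseteq\module(\Lambda,\dd)$ contains a dense orbit. A point of $\module(\Lambda,\dd)$ is a pair $(A,B)$ where $B$ is a nilpotent endomorphism of $K^{\dd(2)}$ with $B^n=0$ and $A\colon K^{\dd(1)}\to K^{\dd(2)}$ satisfies $B^2A=0$; equivalently $\Ima A\subseteq \ker B^2$. The first step is to stratify by the conjugacy class (partition) of the nilpotent operator $B$: on the locally closed subset where $B$ has a fixed Jordan type $\mu$, the stabilizer of $B$ in $\GL(\dd(2))$ is well understood, and what remains is to understand the orbits of this stabilizer acting on the space of maps $A$ with prescribed image constraint. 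I expect that the generic stratum on a given component corresponds to $B$ having the largest possible Jordan blocks compatible with $\dd$ and the relation $b^n=0$, so one can reduce to analyzing, for each partition $\mu$ of $\dd(2)$ with all parts $\le n$, the $\Stab_{\GL}(B_\mu)$-action on $\{A : \Ima A\subseteq \ker B_\mu^2\}$, and then arguing that only finitely many of these strata can be dense in a component.

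The heart of the argument is the promised algorithm (the list \eqref{eq:indecomp} referenced in the introduction): I would write down explicitly all indecomposable $\Lambda$-modules that can have a dense orbit, and show each such module $M$ is rigid enough — concretely, that its orbit is dense in its component. The natural tool is the Artin--Voigt Lemma (Section \ref{sect:doext}): an orbit is (scheme-theoretically) open, hence dense in its component, as soon as $\Ext^1_\Lambda(M,M)=0$. However, as the $K[x]/(x^2)$ example in Section \ref{sect:doext} warns, some dense orbits here will have nontrivial self-extensions, so the plan is two-pronged: for the indecomposables $M$ with $\Ext^1_\Lambda(M,M)=0$, invoke Artin--Voigt directly; for the remaining finitely many families of indecomposables with self-extensions, argue by hand that the closure of the orbit exhausts the component — typically by showing the component has dimension equal to $\dim\GL(\dd)-\dim\Aut(M)$ even though the scheme is non-reduced there, or by exhibiting every other module in the component as a degeneration of $M$. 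Computing $\Ext^1_\Lambda(M,M)$ requires a projective resolution of $M$ over $\Lambda$; since $\Lambda$ has infinite global dimension the resolutions are infinite, but they are eventually periodic (governed by the relations $b^n=0$, $b^2a=0$), so these $\Ext$ computations are finite in practice.

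The reduction step that makes this finite is the following: one must show that for all but finitely many dimension vectors $\dd$, the components of $\module(\Lambda,\dd)$ are (closures of direct sums of) components coming from smaller $\dd$, so that by the generic-decomposition reduction of Remark \ref{do-components} it is enough to treat a finite list of ``building block'' indecomposable components. Concretely I would identify a finite set of indecomposable $\Lambda$-modules $M_1,\dots,M_k$ (the modules in \eqref{eq:indecomp}) such that every indecomposable irreducible component is the component of exactly one $M_i$, and then: (i) classify these $M_i$ explicitly using the structure of a pair (nilpotent operator, distinguished subspace in $\ker b^2$); (ii) verify for each that its orbit is dense in its component by the $\Ext$-vanishing/hands-on dichotomy above; (iii) check no other indecomposable components exist by a dimension count comparing $\dim\module(\Lambda,\dd)$ with $\dim\GL(\dd)-\dim\Aut(M_i)$ over the relevant strata.

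\textbf{Main obstacle.} The principal difficulty is step (iii) together with the non-reduced cases in step (ii): proving that the explicit list \eqref{eq:indecomp} is \emph{complete} — i.e.\ that no indecomposable irreducible component has been missed — and handling those components whose generic module has self-extensions, where the clean Artin--Voigt criterion fails and one must instead control the full scheme structure of the component. This is exactly the subtlety flagged in Section \ref{sect:doext}, and I expect it to require a careful, case-by-case analysis of the strata indexed by the Jordan type of $b$ and the position of $\Ima a$ inside the Jordan structure of $b$, rather than any single slick argument.
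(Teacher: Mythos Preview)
Your plan diverges from the paper's proof in a way that leaves a genuine gap. The paper does \emph{not} first classify a finite list of candidate indecomposables and then verify dense orbits via $\Ext^1$-vanishing or dimension counts. Instead, it runs a single inductive algorithm: fix a component $C$, pass to the open locus where $B$ has a fixed Jordan type $\lambda$, identify $(K^{\dd(2)},B)$ with the $K[x]$-module $X=\bigoplus_i \bar\lambda_i\,J_i$, and regard $A$ as a matrix with entries in $K[x]$ (the relation $b^2a=0$ forces the entries in block row $X_i$ into $(x^{i-2})$). The action of $\GL(\dd(1))\times Z_B$ becomes explicit $K[x]$-row and $K$-column operations, and Gaussian elimination over $K[x]$ puts a generic $A$ into a normal form from which one reads off a direct summand of $(A,B)$ belonging to the finite list \eqref{eq:indecomp}. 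This simultaneously proves density (the normal form is reached by group operations, so the orbit of the resulting module is dense) and completeness (every component is handled by induction on $\dd$). Artin--Voigt is never invoked.

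The gap in your proposal is that steps (ii) and (iii) are circular as stated. For most modules in \eqref{eq:indecomp}---e.g.\ $(0,J_m)$ with $m<n$, or the two-block modules---one has $\Ext^1_\Lambda(M,M)\neq 0$, so Artin--Voigt fails and you fall back on ``showing the component has dimension equal to $\dim\GL(\dd)-\dim\Aut(M)$''. But computing $\dim C$ for a component $C$ of $\module(\Lambda,\dd)$ is exactly what is at stake; you have no independent handle on it. Likewise, your completeness argument (iii) proposes to compare $\dim\module(\Lambda,\dd)$ with orbit dimensions ``over the relevant strata'', but you have not explained how to enumerate the components or compute their dimensions without already knowing the answer. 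The paper's matrix-reduction sidesteps both issues: it never needs to know $\dim C$ or $\Ext^1$, because it exhibits the generic point of $C$ explicitly as a direct sum by row/column operations.
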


The proof proceeds by induction on the dimension vector.  Using Remark \ref{do-components}, it is enough to take a general element of an irreducible component and show that it has a direct summand whose orbit is dense.  We do this by direct calculation, so that our method is actually an algorithm to calculate the generic module in a given component, yielding a classification of indecomposable components.  A ``conceptual'' proof without calculations does not seem to be within reach using current technology (see Section \ref{sect:doext}).  But we can make the computations easier by reframing the problem in terms of matrices over the polynomial ring $K[x]$.

Let $\dd =(d_1,d_2)$ be a dimension vector for $\Lambda$, and fix an irreducible component $C$ of $\module(\Lambda, \dd)$.  Denote by $A, B$ matrices representing the actions of $a,b$ at a typical point of $\module(\Lambda, \dd)$.
There is an open subset of $C$ on which the Jordan type of $B$ is constant.  In this case we say that $C$ is of type $\lambda$, where $\lambda = (\lambda_1, \lambda_2, \dotsc)$ is a partition such that $\lambda_i = \dim \ker B^i$.  
Now we fix a representative for $B$ of general Jordan type in $C$, and let $Z_B \subseteq \GL(d_2)$ be the centralizer of $B$. Then we see that $C$ has a dense $\GL(\dd)$ orbit if and only if $\Hom_K (V_1, V_2)\simeq \Mat_{d_2\times d_1}$ has a dense $H:=\GL(d_1) \times Z_B$ orbit. 

We identify the pair $(K^{d_2}, B)$ with a torsion $K[x]$-module $X$, and fix a decomposition into indecomposables
\begin{equation}
X \simeq \bigoplus_{i=1}^n {\bar{\lambda}_i}\, J_i,
\end{equation}
where $J_i := K[x]/(x^i)$ and $\bar{\lambda}_i:= \lambda_i - \lambda_{i+1}$ is the multiplicity of this summand. 
The centralizer of $B$ in $\GL(d_2)$ is naturally identified with $\Aut_{K[x]}(X)$ in this way, and the matrix $A$ can be thought of as having entries in $K[x]$.  Each row of $A$ corresponds to a summand $J_i$, which we order by increasing dimension.  So if we group the summands into isotypical components $X_i := {\bar{\lambda}_i}\, J_i$, the matrix $A$ has block form 

\begin{equation}
\vcenter{\hbox{
\begin{tikzpicture}
\matrix (m) [matrix of math nodes,nodes in empty cells,left delimiter  = {[}, right delimiter = {]} ]
{
\phantom{X_1} &  &\phantom{X_1}\\
\phantom{X_1} &  &\phantom{X_1}\\
\phantom{\vdots} &\vdots  &\phantom{X_1}\\
\phantom{X_1} &  &\phantom{X_1}\\
\phantom{X_1} &  &\phantom{X_1}\\
};
\draw[thick] (m-1-1.south west) -- (m-1-3.south east);
\draw[thick] (m-2-1.south west) -- (m-2-3.south east);
\draw[thick] (m-4-1.south west) -- (m-4-3.south east);
\node [left=of m-1-1.east] {$X_1$};
\node [left=of m-2-1.east] {$X_2$};
\node [left=of m-3-1.east] {$\vdots$};
\node [left=of m-5-1.east] {$X_n$};
\end{tikzpicture}}},
\end{equation}
and the condition $b^2a =0$ is equivalent to requiring each entry in block row $X_i$ to be in the ideal $(x^{i-2})$, that is, of the form $ax^{i-2} + bx^{i-1}$ for some sufficiently general $a, b \in K$.  The group $H$ acts by:
\begin{itemize}
\item arbitrary $K$-linear column operations;
\item row operations of the form ``add $f$ times a row in block $X_i$ to a row in block $X_j$,'' where $f \in K[x]$ for $i \leq j$, and $f \in (x^{i-j})$ for $i > j$.
\end{itemize}
In this notation, we denote a representation of $\Lambda$ by a pair $(W, M)$ where $M$ is a $K[x]$ module and $W \subseteq M$ is a $K$-linear subspace.

Now we must analyze several cases, branching on the relation between $d_1$ and $\lambda$.  In what follows, we let $\mathbf{1}_{d}$ denote a $d\times d$ identity matrix and $*x^i$ a region of entries in the ideal $(x^i)$ (or simply $*$ when $i=0$).  We use the fact that we are working with a sufficiently generic matrix in our component throughout, without explicit mention.  By induction we can also assume $\bar{\lambda}_n \neq 0$, otherwise $C$ would be a component for an algebra of smaller dimension.
For the sake of readability we display matrices for the case $n=5$.

Using Gauss elimination, we can reduce to the case $d_1 \geq \lambda_2$ and put $A$ into the form
\begin{equation}\label{eq:step2}
A=\vcenter{\hbox{
\begin{tikzpicture}
\matrix (m) [matrix of math nodes,nodes in empty cells,left delimiter  = {[}, right delimiter = {]},
nodes={minimum width=2em} ]
{
* & * &* & * & * \\
{\mathbf{1}}_{\bar{\lambda}_2} & * &* & * & *  \\
0 & x \mathbf{1}_{\bar{\lambda}_3}  & *x & *x & *x \\
0 & 0  &x^2 \mathbf{1}_{\bar{\lambda}_4} & *x^2 & *x^2\\
0 & 0 & 0 & x^3 \mathbf{1}_{\bar{\lambda}_5} & *x^3  \\
};
\node [left=of m-1-1.east] {$X_1$};
\node [left=of m-2-1.east] {$X_2$};
\node [left=of m-3-1.east] {$X_3$};
\node [left=of m-4-1.east] {$X_4$};
\node [left=of m-5-1.east] {$X_5$};
\end{tikzpicture}}}
\end{equation}
where the last column is of width 0 if $d_1 = \lambda_2$.  (If $d_1 < \lambda_2$ then there would be a row of 0s at bottom, giving a direct summand $(0, J_n)$ of a general representation.)  Then using column operations to the right, we can clear leading terms to increase powers of $x$.
\begin{equation}\label{eq:step3}
A=\vcenter{\hbox{
\begin{tikzpicture}
\matrix (m) [matrix of math nodes,nodes in empty cells,left delimiter  = {[}, right delimiter = {]},
nodes={minimum width=2em} ]
{
* & * &* & * & * \\
{\mathbf{1}}_{\bar{\lambda}_2} & *x &*x & *x & *x  \\
0 & x \mathbf{1}_{\bar{\lambda}_3}  & *x^2 & *x^2 & *x^2  \\
0 & 0  &x^2 \mathbf{1}_{\bar{\lambda}_4} & *x^3 & *x^3 \\
0 & 0 & 0 & x^3 \mathbf{1}_{\bar{\lambda}_5} & *x^4  \\
};
\node [left=of m-1-1.east] {$X_1$};
\node [left=of m-2-1.east] {$X_2$};
\node [left=of m-3-1.east] {$X_3$};
\node [left=of m-4-1.east] {$X_4$};
\node [left=of m-5-1.east] {$X_5$};
\end{tikzpicture}}}.
\end{equation}

We claim that the top row always splits off, taking the bottom row with it precisely when $d_1 = \lambda_2$.  This is seen by direct computation: take the upper right entry and use column operations to clear entries of the same degree in its row.  This first "messes up" the lower part, but these blocks can be restored by Gaussian elimination again (from strictly lower rows).
Now the top row has only one nonzero entry, so it can almost clear the column below it since row operations to lower blocks contribute factors of $x$.  The only way the column fails to be completely cleared is if $d_1 = \lambda_2$, in which case the entry is $x^{n-2}$ instead of $x^{n-1}$.  

In summary, if we let $m$ be such that the top row is of type $J_m$ (i.e., $m$ is minimal such that $\bar{\lambda}_m \neq 0$), then the direct summands that split off are as follows.  If $d_1 > \lambda_2$, we get the direct summand $(Kx^{m-2} + Kx^{m-1}, J_m)$, and if $d_1 = \lambda_2$ then we get the direct summand $(K(x^{m-2}\oplus 0) + K(x^{m-1} \oplus x^{n-2}), J_m\oplus J_n)$. For $m=1$ we take $x^{m-2} = 0$, and in case $m=n$ the later type further decomposes.   We summarize the classification of indecomposable components obtained from this algorithm.
\begin{equation}\label{eq:indecomp}
\begin{array}{|c|c|c|}
\hline
\dd & \text{Jordan type} &  A\\
\hline
(1,0) & \text{none} & \text{none}\\
(0, m) & J_m & \text{none} \\
(1, 1) & J_1 & [1] \\
(1, m) &  J_m,\ m \geq 2 & [x^{m-1}]\\
(2, m) &  J_m,\ m \geq 2 & [x^{m-1}\ x^{m-2}] \\
(1, 1+n) & J_1 \oplus J_n & \begin{bmatrix}1\\x^{n-2}\end{bmatrix}   \\
(2, m+n) & J_m \oplus J_n ,\ m < n & \begin{bmatrix}x^{m-2} & x^{m-1}\\ 0 & x^{n-2}\end{bmatrix}  \\
\hline
\end{array}
\end{equation}

\subsection{Dense orbit conjecture and open questions}

The following conjecture of Weyman has guided our results connecting the DO property with representation theory.  In this subsection we propose a proof strategy and carry out some steps towards the conjecture.

\begin{conjecture}[{\bf Dense Orbit Conjecture}]\label{conj:repfiniteDO} For a triangular algebra $A$, the following statements are equivalent:
\begin{enumerate}
\renewcommand{\theenumi}{\roman{enumi}}
\item $A$ is representation-finite;
\item for any ideal $I$ of $A$, the algebra $A/I$ is DO.
\end{enumerate}
\end{conjecture}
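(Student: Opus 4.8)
The implication (i) $\Rightarrow$ (ii) is immediate: if $A$ is representation-finite then for any ideal $I$ the variety $\module(A/I,\dd)$ is a closed, $\GL(\dd)$-stable subvariety of $\module(A,\dd)$ with only finitely many orbits, so each of its irreducible components is the closure of one of the orbits it contains; hence $A/I$ is DO. The content is the reverse implication, which we would prove by contraposition: assuming $A$ is triangular and representation-infinite, we must produce an ideal $I$ with $A/I$ not DO.

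The first step is a reduction to the minimal case. Since $A$ is finite-dimensional, the set of ideals $I$ with $A/I$ representation-infinite is nonempty and has a maximal element $I_0$, and $B:=A/I_0$ is then triangular and \emph{minimal representation-infinite}: every proper quotient of $B$ is representation-finite. It therefore suffices to prove that no minimal representation-infinite triangular algebra is DO. By Lemma \ref{lem:infschur} and Proposition \ref{prop:schurfin-mf} it is in fact enough to show that such a $B$ is not Schur-representation-finite, equivalently not MF; and the not-MF conclusion we are after is exactly the one established for tame concealed algebras in the proof of Theorem \ref{thm-preproj-comp}, namely that some $B$-module $M$ has $\M(B,\ddim M)^{ss}_{\theta}\cong\PP^1$ for a suitable weight $\theta$, or, more crudely, that some dimension vector of $B$ admits infinitely many pairwise non-isomorphic bricks.

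To finish, we would split on the tame--wild dichotomy for $B$. In the tame case one invokes the structure theory of minimal representation-infinite tame algebras---the tame concealed algebras of Happel--Vossieck and their relatives---and checks, as in Theorem \ref{thm-preproj-comp}, that a homogeneous tube supplies a module $M$ with $\M(B,\ddim M)^{ss}_{\theta}\cong\PP^1$; the string-algebra instance of this is already covered by Proposition \ref{string-not-DO}. In the wild case the plan is to use that a wild algebra is controlled wild, so that a generic family of bricks over $K\langle x,y\rangle$ transports to an infinite family of $B$-bricks of a single dimension vector, contradicting Schur-representation-finiteness and hence, by Lemma \ref{lem:infschur}, the DO property.

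We expect the wild case to be the main obstacle. Showing that a representation-infinite wild triangular algebra fails to be Schur-representation-finite is a ``second Brauer--Thrall for bricks'' type assertion, and although it should follow from controlled-wild theory, the ``controlled'' hypothesis only determines the endomorphism algebras of the transported modules up to a fixed bimodule correction term, so one must verify that this term vanishes on a sufficiently large subfamily. A secondary difficulty is making the tame side uniform, since a convenient classification of the minimal representation-infinite triangular algebras does not seem to be available; it would be preferable to prove directly that every minimal representation-infinite algebra carries a copy of $\PP^1$ inside some moduli space of its modules. Finally, we note a possible shortcut: should every triangular representation-infinite algebra contain a tame concealed \emph{convex} subcategory $C$ (a Bongartz-type statement), then, since every module variety of $C$ occurs literally as a module variety of the ambient algebra---extend the dimension vector of $C$ by zero---the MF property would pass to $C$, so $A$ itself would fail to be MF and the conjecture would follow already with $I=0$; the persistence of the ``for any ideal $I$'' formulation suggests this route is not currently known to work in the required generality.
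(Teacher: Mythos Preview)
The statement is labeled a \emph{conjecture} in the paper and is not proved there; the paper offers a strategy and settles two of three cases. So your proposal is to be compared against a partial program, not a finished proof.

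Your reduction to a minimal representation-infinite quotient $B$ matches the paper's first step. From there the routes diverge. You split on the tame/wild dichotomy; the paper instead invokes the Bongartz--Ringel trichotomy for minimal representation-infinite algebras: (1) non-distributive ideal lattice; (2) a good universal cover containing a convex tame concealed subcategory of type $\widetilde{D}_n$, $\widetilde{E}_6$, $\widetilde{E}_7$, or $\widetilde{E}_8$; (3) a good universal cover whose finite convex subcategories are all representation-finite, which Ringel shows are certain string algebras. The paper then disposes of case~(1) by an explicit one-parameter family of Schur modules (Proposition~\ref{prop:nondistrib-do}) and of case~(3) via band modules (Proposition~\ref{string-not-DO}); case~(2) is left open.

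The trichotomy slices the problem more usefully than your dichotomy. Non-distributive algebras in case~(1) may well be wild, yet the paper handles them in a few lines without any controlled-wild machinery; your plan would instead route this easy instance through the ``second Brauer--Thrall for bricks'' obstacle you rightly flag as the main difficulty. On the tame side, you do cite Proposition~\ref{string-not-DO} for the string case, but the remaining tame minimal representation-infinite algebras need not have preprojective components, so the Happel--Vossieck argument behind Theorem~\ref{thm-preproj-comp} does not apply to them uniformly---a gap you also acknowledge. In short, your self-diagnosed obstacles are genuine, and the paper's residual open case~(2) is strictly narrower than either of the two unresolved ends in your outline.
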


The formulation of statement (ii) allows us to reduce to the case that $A$ is minimal representation-infinite, where we can use recent work of Bongartz and Ringel.  They have shown that any minimal representation-infinite, finite-dimensional algebra is in one of the following three categories (see the Introduction of \cite{MR2931904}):
\begin{enumerate}
\item Algebras with a non-distributive ideal lattice;
\item Algebras with a ``good'' universal cover that contains a convex subcategory which is tame concealed of type $\widetilde{D}_n, \widetilde{E}_6, \widetilde{E}_7$, or $\widetilde{E}_8$;
\item Algebras with a ``good'' universal cover, but in which all finite convex subcategories are representation finite.
\end{enumerate}
In fact, Ringel showed that all algebras in case (3) are string algebras, and explicitly classified them in terms of quivers with relations.
We now prove the conjecture for classes (1) and (3).

The following proposition, covering case (1), comes directly by following Bongartz's argument in \cite[\S~1]{MR3038490}, with the additional assumption that $A$ is triangular.

\begin{prop}\label{prop:nondistrib-do} Let $A$ be a non-distributive triangular algebra.  Then $A$ admits infinitely many Schur representations of the same dimension, and thus $A$ is not DO.
\end{prop}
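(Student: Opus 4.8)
The plan is to follow Bongartz's argument from \cite[\S~1]{MR3038490}, tracking carefully where triangularity of $A$ is used and why it guarantees the stronger conclusion we want. Since $A$ is non-distributive, its ideal lattice is non-distributive, which by a standard lattice-theoretic fact means there is a sublattice isomorphic to the diamond $M_3$ or the pentagon $N_5$. Translating this into module theory, one obtains a submodule $U$ of some projective (or more generally a module $M$ together with submodules) realizing three pairwise-incomparable submodules with the same ``sum'' and ``intersection''. The point is that the three given submodules can be deformed into a one-parameter (in fact $\PP^1$-parameter) family of submodules $U_\lambda$, all with the same dimension vector, and one checks that the corresponding subquotients (or the modules $M/U_\lambda$, or $U_\lambda$ themselves) are pairwise non-isomorphic for infinitely many values of $\lambda$.

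First I would make precise the module-theoretic consequence of non-distributivity: there exist an $A$-module $M$ and submodules $V, W_1, W_2, W_3 \subseteq M$ with $W_i + V$, $W_i \cap V$ independent of $i$ but the $W_i$ pairwise distinct, arranged so that linear combinations $W_\lambda$ (for $\lambda \in \PP^1$) give infinitely many distinct submodules of a fixed dimension vector. Next I would show that one can arrange these to be \emph{Schur} modules: here is exactly where triangularity enters. For a triangular algebra, one can use the fact that the relevant indecomposable modules appearing in Bongartz's construction have endomorphism ring $K$ — the absence of oriented cycles rules out the nilpotent endomorphisms and loops that would otherwise inflate $\End_A$. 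So the infinite family is a family of non-isomorphic Schur modules, all of the same dimension vector. Concretely, one takes the dimension vector $\dd$ to be that common value, and exhibits a $\PP^1$-family inside $\module(A,\dd)$ of pairwise non-isomorphic Schur modules.

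Finally, having produced infinitely many $\dd$-dimensional Schur modules, I would invoke Lemma \ref{lem:infschur}: a DO algebra is Schur-representation-finite, so $A$ cannot be DO. (Alternatively, one argues directly: the closure of the union of these orbits lies in some irreducible component $C \subseteq \module(A,\dd)$, an irreducible variety containing infinitely many Schur orbits; since any dense orbit in $C$ is unique and $\dim \End_A$ is upper semicontinuous — strictly increasing on passing to a smaller orbit in the closure — no single orbit can be dense, contradicting DO.) This gives both conclusions of the proposition simultaneously: $A$ has infinitely many Schur representations of dimension $\dd$, hence is not DO.

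The main obstacle I anticipate is the second step — verifying that the members of the $\PP^1$-family can be taken to be Schur, and in particular extracting precisely the right statement from Bongartz's construction and seeing that triangularity is what makes the endomorphism rings trivial. The lattice-theoretic input (existence of $M_3$ or $N_5$) and the final semicontinuity argument are routine; the delicate point is the interface between the combinatorics of the non-distributive ideal lattice and the representation-theoretic claim that the resulting deformation family consists of bricks, which is where one must lean most heavily on the cited argument of Bongartz and on the hypothesis that $Q$ has no oriented cycles.
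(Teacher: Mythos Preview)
Your overall strategy matches the paper's: exploit non-distributivity to build a one-parameter family of modules of fixed dimension vector, use triangularity to force these to be Schur, then invoke Lemma~\ref{lem:infschur}. But the crucial middle step, which you yourself flag as the obstacle, is left as a vague appeal to ``absence of oriented cycles rules out nilpotent endomorphisms''; this is not enough, and the paper's argument is both more concrete and rather different in detail from what you sketch.

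The paper does not go through the lattice-theoretic $M_3/N_5$ dichotomy or an abstract ``three submodules'' setup. Instead it uses the bimodule characterization of non-distributivity: there exist primitive idempotents $e,f$ such that some radical layer $R^l/R^{l+1}$ of the $eAe$--$fAf$-bimodule $fAe$ has dimension $\geq 2$. Pick $v,w\in R^l$ independent mod $R^{l+1}$, and pass to the quotient of $A$ by the ideal generated by $R^{l+1},\ Jv,\ vJ,\ Jw,\ wJ$ (with $J=\rad A$). Now the family is simply
\[
V_\lambda \;=\; Ae/\langle v-\lambda w\rangle,\qquad \lambda\in K.
\]
The Schur property is not obtained by a general ``no cycles, no nilpotents'' heuristic but by an explicit Hom computation: applying $\Hom_A(-,V_\lambda)$ to the surjection $Ae\twoheadrightarrow V_\lambda$ gives an injection $\End_A(V_\lambda)\hookrightarrow \Hom_A(Ae,V_\lambda)\cong eV_\lambda$. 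Triangularity enters \emph{exactly} here: it forces $eAe\cong K$ (no loops) and $e\neq f$, so the submodule $\langle v-\lambda w\rangle\subseteq fAe$ contributes nothing to $eV_\lambda$, whence $eV_\lambda=eAe=K$ and $V_\lambda$ is Schur. That is the missing idea in your sketch; once you have it, the appeal to Lemma~\ref{lem:infschur} is exactly as you describe.
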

\begin{proof}
Since $A$ is non-distributive, it admits primitive idempotents $e,f$ (not necessarily distinct) such that $fAe$ is neither cyclic as a left $fAf$-module nor right $eAe$-module; in other words, the radical filtration $(R^i)$ of $fAe$ as an $eAe$-$fAf$-bimodule admits a step $R^l / R^{l+1}$ of dimension $\geq 2$.  Choose some $v,w \in R^l$ which are linearly independent modulo $R^{l+1}$.  Then without loss of generality we can mod out (the two-sided ideal generated by) $R^{l+1}, Jv, vJ, Jw, wJ$ where $J=\rad A$.

If $A$ is triangular, then in particular there are no loops at any vertex of its quiver $Q$, so that $eAe \simeq fAf \simeq K$ and $e \neq f$.  Consider the family $V_\lambda = Ae / \langle v - \lambda w \rangle$ for $\lambda \in K$.
Applying $\Hom_A( - , V_\lambda)$ to the quotient, we get $0 \to \End_A (V_\lambda) \to \Hom_A(Ae, V_\lambda)$.  But $Ae$ is projective, so $\Hom_A(Ae, V_\lambda) = eV_\lambda = eAe =K$.  Thus $V_\lambda$ is a Schur $A$-module, and $A$ is not DO by Lemma \ref{lem:infschur}.
\end{proof}

The next proposition covers case (3).  See Section \ref{sec:tameMF} below for the definition of a string algebra.

\begin{prop}\label{string-not-DO} Let $A$ be a representation-infinite string algebra. Then $A$ does not have the DO property.
\end{prop}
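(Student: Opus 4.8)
The plan is to show that a representation-infinite string algebra $A$ admits a one-parameter family of pairwise non-isomorphic Schur modules of a fixed dimension vector, which by Lemma \ref{lem:infschur} rules out the DO property. The natural candidates are the \emph{band modules}: recall that for a string algebra, the indecomposable modules are classified into string modules and band modules, and $A$ is representation-infinite precisely when there exists a band, i.e., a cyclic string $w$ (not a proper power) such that for each $\lambda \in K^*$ one gets an indecomposable band module $M(w,\lambda,1)$ of ``length one''. These all share the same dimension vector $\dd_w$ (determined by how many times $w$ passes through each vertex), and $M(w,\lambda,1) \not\simeq M(w,\mu,1)$ for $\lambda \neq \mu$ unless $w$ is related to its inverse by a rotation; in any case one gets infinitely many isoclasses in a single $\module(A,\dd_w)$.

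The key step is to verify that these band modules of quasi-length one are Schur, i.e., $\End_A(M(w,\lambda,1)) \simeq K$. This is a standard fact about string algebras: homomorphisms between string and band modules are described combinatorially by ``graph maps'' (arising from common substrings, in the sense of Crawley-Boevey's and Krause's functorial description), and for a band module of quasi-length one the only such maps are scalar multiples of the identity — essentially because a band is not a proper power and the relevant overlaps of $w$ with itself force the map to be a homothety. I would cite the relevant structural results (e.g., Butler–Ringel \cite{MR799266} for the classification of modules and morphisms over string algebras, and the standard computation of $\End$ of band modules) rather than reprove them. From there: since $\{M(w,\lambda,1) \mid \lambda \in K^*\}$ is an infinite family of Schur modules all of dimension vector $\dd_w$, the algebra $A$ is not Schur-representation-finite, hence not DO by Lemma \ref{lem:infschur}.

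The main obstacle is pinning down precisely the endomorphism computation — in particular handling the degenerate possibility that the chosen band $w$ is conjugate (by rotation) to its own formal inverse $w^{-1}$, which is the situation in which $M(w,\lambda,1)$ and $M(w,\lambda^{-1},1)$ could coincide. Even then the family remains infinite (one simply gets a two-to-one, rather than one-to-one, parametrization by $\lambda$), so the argument survives; but the write-up must address this so as not to overclaim injectivity of $\lambda \mapsto [M(w,\lambda,1)]$. A secondary point to be careful about is the existence of a band at all: representation-infinite string algebras always have one, which follows from Ringel's analysis (and is exactly why strings alone cannot account for infinitely many indecomposables), so I would invoke that rather than argue it from scratch. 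Once these two points are stated cleanly, the proof is short.
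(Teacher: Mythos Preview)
There is a genuine gap: your central claim that band modules of quasi-length one are Schur is \emph{false} in general, and in fact the entire strategy of deducing non-DO from non-Schur-representation-finite cannot work for this proposition. The paper itself provides the counterexample. In Example~\ref{ex:butterly} (the ``butterfly'' algebra) the unique primitive band $B=c^{-1}ef^{-1}db^{-1}a$ passes through the middle vertex twice, and Krause's description of homomorphisms between band modules (cited in the proof of Corollary~\ref{cor:MFstring}) shows that every band module $M(B,\lambda,1)$ carries a nonzero nilpotent endomorphism coming from a substring overlap of $B$ with itself. Hence the butterfly algebra is a representation-infinite string algebra that is Schur-representation-finite; Lemma~\ref{lem:infschur} therefore says nothing here, yet Proposition~\ref{string-not-DO} still asserts it is not DO. Your ``essentially because a band is not a proper power'' heuristic only rules out the full cyclic self-overlap; partial overlaps (which occur exactly when the band revisits a vertex) produce nilpotent endomorphisms.

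The paper's proof sidesteps Schur-ness entirely. It chooses a minimal dimension vector $\dd$ supporting infinitely many indecomposable band modules, assumes a dense orbit $\overline{\GL(\dd)M_0}=C$, and argues (via a rank inequality, using that ranks are upper-semicontinuous and that $\dim M=\sum_a \rk M(a)$ for band modules) that $M_0$ has no string summands, hence by minimality is itself an indecomposable band module. The contradiction then comes not from $\dim\End=1$ but from the fact, due to Krause, that all band modules in a fixed family have the \emph{same} endomorphism dimension; since they all lie in $C$ (being in the image of a morphism $\GL(\dd)\times K^*\to\module(A,\dd)$) and endomorphism dimension is strictly larger on the boundary of an orbit, none of them can degenerate to another. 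Your write-up should either follow this line or find another argument that does not go through Schur-representation-finiteness.
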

\begin{proof} Since $A$ is representation-infinite, we know that there are indecomposable band modules. Among the dimension vectors $\dd$ of $A$ for which there are infinitely many $\dd$-dimensional indecomposable band modules, we choose a $\dd$ with $\sum_{i\in Q_0}\dd(i)$ minimal.

Assume to the contrary that $A$ is DO. Let $C \subseteq \module(A,\dd)$ be an irreducible component that contains infinitely many indecomposable band modules. Since $A$ is DO, there exists $M_0 \in C$ such that $C=\overline{\GL(\dd)M_0}$.  We claim that $M_0$ is an indecomposable band module.
Given this, we have a contradiction: the orbits of all other band modules in the same family as $M_0$ lie in the same irreducible component, since they are obtained as the image of a certain morphism $\phi\colon \GL(\dd) \times K^* \to \module(A, \dd)$, so they are all in the boundary of the orbit $\GL(\dd)M_0$.
But the endomorphism algebras of band modules in the same family have the same dimension \cite{MR1090218}, so the orbit of one cannot be in the boundary of another.

To prove the claim, denote by $s$  the number of string modules occurring in a direct sum decomposition of $M_0$ into indecomposables.
For any indecomposable band module $M \in C$ we have
\begin{equation}
\dim_K M=\sum_{a \in Q_1} \rk M(a) \leq \sum_{a \in Q_1} \rk M_0(a)=\dim_K M_0-s,
\end{equation}
which implies that $s=0$. Now $M_0$ is an indecomposable by minimality of $\dd$.
\end{proof}

It would be nice to understand how the DO property behaves under quotients (i.e., we would like an analogue of Lemma \ref{lem:mfquot}).  So we pose the following question.

\begin{question}
Is it true that any quotient of a DO algebra is DO?
\end{question}

If the answer to the question is `yes' then statement (ii) of the Dense Orbit Conjecture can be simplified to ``$A$ is DO.''

\section{Representation-infinite MF algebras}\label{sec:rep-inf-MF-algebras}

\subsection{Tame MF algebras}\label{sec:tameMF}
In this section, we give a representation theoretic characterization of the MF property for tame algebras, and apply this to classify MF string algebras.


\begin{theorem}\label{thm:tamemf} A tame algebra is Schur-representation-finite if and only if it is MF.
\end{theorem}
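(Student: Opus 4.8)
The plan is to prove both implications, using that Schur-representation-finite $\Rightarrow$ MF is already established (Proposition~\ref{prop:schurfin-mf}), so the real work is the converse: a tame MF algebra is Schur-representation-finite. I would argue by contraposition. Suppose $A$ is tame but not Schur-representation-finite. Then there is a dimension vector $\dd$ admitting infinitely many $\dd$-dimensional Schur modules up to isomorphism. Since $A$ is tame, for this $\dd$ all but finitely many indecomposables of dimension $\dd$ come in one-parameter families parametrized (generically) by a rational curve, via finitely many $A$-$K[x]$-bimodules $M_i$ that are free of finite rank over $K[x]$; the modules $M_i \otimes_{K[x]} K[x]/(x - \lambda)$ for $\lambda \in K$ give these families. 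Infinitely many Schur modules of dimension $\dd$ therefore forces one such family to consist generically of Schur modules, and moreover (by general position in the one-parameter family) of \emph{homogeneous} modules — i.e.\ $M \simeq \tau M$ — since the Auslander--Reiten translate acts on the parameter and a generic member lies on a homogeneous tube / is $\tau$-periodic with period one after possibly passing to a sub-family.

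\textbf{Key steps.} First I would fix the irreducible component $C \subseteq \module(A,\dd)$ containing the closure of such a homogeneous Schur one-parameter family; this exists because the family is the image of a morphism $K \setminus F \to \module(A,\dd)$ (with $F$ finite), hence lies in a single irreducible component. Second, I would invoke Lemma~\ref{lemma:stable-Schur-homogeneous}: each homogeneous Schur module $M$ in the family is $\theta^M$-stable, where $\theta^M$ is the canonical weight of $M$. Third — and this is the crucial point — for a whole homogeneous one-parameter family, the weight $\theta^M$ is \emph{constant}: it is computed from a minimal projective presentation of $M$, and $\ddim M$ is constant $=\dd$ along the family, while $\dim \Hom_A(P_j, M)$ depends only on $\dd$ when $P_j$ is projective (namely $\dim \Hom_A(P_j,M) = \dd(i_j)$ where $P_j$ is the projective cover of $S_{i_j}$, as long as the presentation length is uniform, which holds generically on the family). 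So there is a single weight $\theta := \theta^{M_\lambda}$ for generic $\lambda$. Fourth, all these $M_\lambda$ lie in $C^{ss}_\theta$ and are $\theta$-stable, hence pairwise non-isomorphic $\theta$-stable modules in $C$ of dimension $\dd$, so the moduli space $\M(C)^{ss}_\theta$ is positive-dimensional. Fifth, by King's description, the closed points of $\M(C)^{ss}_\theta = \Proj\big(\bigoplus_{n\geq 0}\SI(C)_{n\theta}\big)$ are the $\theta$-polystable modules in $C$; positive-dimensionality of this $\Proj$ forces $\dim_K \SI(C)_{n\theta} \geq 2$ for $n \gg 0$ (a finitely generated graded domain with $\Proj$ of positive dimension cannot have all graded pieces of dimension $\leq 1$). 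This contradicts $A$ being MF, completing the contrapositive.

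\textbf{Main obstacle.} The step I expect to require the most care is the third one: showing that one can choose the one-parameter family of Schur modules so that its generic member is homogeneous \emph{and} that $\theta^{M_\lambda}$ is genuinely independent of $\lambda$ along the family. The homogeneity is where tameness is really used — one must argue that among infinitely many Schur modules of a fixed dimension in a tame algebra, infinitely many lie on $\tau$-periodic orbits of period one (equivalently, on homogeneous tubes in the relevant AR-component), rather than, say, on a family where $\tau$ shifts the parameter nontrivially; this is standard for tame hereditary and tame concealed algebras but for a general tame algebra needs the structure theory of one-parameter families (Crawley--Boevey / Drozd) to guarantee that a generic member of at least one family is homogeneous. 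The constancy of $\theta^{M_\lambda}$ then follows from semicontinuity plus the fact that $\ddim M_\lambda$ and the ranks of the projectives in a minimal presentation are constant on a dense open subset of the parameter curve, so $\theta^{M_\lambda}$ takes a constant value generically; one restricts to that dense open subset. Everything else — reducing to a single component, applying King's correspondence, and deducing $\dim \SI(C)_{n\theta}\geq 2$ — is routine given the machinery already set up in Sections~\ref{sect:moduli} and the lemmas above.
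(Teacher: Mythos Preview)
Your outline is essentially correct and lands on the same contradiction as the paper, but you are making the argument harder than it needs to be, and the step you flag as the ``main obstacle'' is in fact avoidable.

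The paper's proof does not track a whole one-parameter family or worry about whether $\theta^{M_\lambda}$ is constant along it. Instead, having chosen an irreducible component $C$ containing infinitely many Schur modules (so $C$ is not an orbit closure), it invokes Crawley--Boevey's Theorem~D directly: for a tame algebra, all but finitely many indecomposables of a given dimension are homogeneous. So there exists a \emph{single} homogeneous Schur module $M \in C$. Now use Lemma~\ref{lemma:stable-Schur-homogeneous} once to get $M \in C^s_{\theta^M}$. The point you are missing is that for this \emph{fixed} weight $\theta := \theta^M$, the stable locus $C^s_{\theta}$ is an open subset of $C$; since $C$ is irreducible and $C^s_{\theta}$ is nonempty, it is automatically dense. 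Because $C$ is not an orbit closure, $C^s_{\theta}$ is an infinite union of orbits, and hence $\dim \M(C)^{ss}_{\theta} \geq 1$, contradicting MF. No argument about constancy of minimal projective presentations along a family is required.

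Two smaller remarks. First, your informal justification of homogeneity (``$\tau$ acts on the parameter and a generic member lies on a homogeneous tube'') is not the right picture for a general tame algebra; the correct input is exactly Crawley--Boevey's theorem, which you do eventually name---just cite it up front rather than trying to re-derive it. Second, your constancy argument for $\theta^{M_\lambda}$, while plausible via semicontinuity of the ranks in a minimal presentation, is more delicate than you suggest (the projectives $P_0,P_1$ themselves, not just $\dim\Hom_A(P_j,M)$, must be constant), and it is a relief that the argument does not need it.
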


\begin{proof} 
The implication ``$\Longrightarrow$'' was proved in Proposition \ref{prop:schurfin-mf} for all algebras.

Now, let $A$ be a tame algebra with the MF property. Assume for a contradiction that there is a dimension vector $\dd$ of $A$ and an irreducible component $C \subseteq \module(A,\dd)$ that contains infinitely many Schur $A$-modules. We immediately deduce from this that $C$ is not an orbit closure (see for example Lemma \ref{lem:infschur}).

From Crawley-Boevey's Theorem D in \cite{MR931510}, we know that all, except finitely many, Schur modules in $C$ are homogeneous. So let $M \in C$ be a homogeneous Schur $A$-module. Then the $\theta^M$-stable locus $C^s_{\theta^M} \subset C$ is non-empty and dense by Lemma \ref{lemma:stable-Schur-homogeneous}, and moreover, $C^s_{\theta^M}$ must be an infinite disjoint union of orbits since otherwise $C$ would be an orbit closure. Consequently, we have that $\dim \M(C)^{ss}_{\theta^M} \geq 1$, which is in contradiction to $A$ being MF.
\end{proof}

String algebras are a well-understood class of tame algebras whose indecomposable representations can be completely described in terms of certain paths in the associated quiver. Our characterization of the MF property for tame algebras can be made even more explicit for string algebras.

A bound quiver algebra $A=KQ/I$ is said to be a \key{string algebra} if $I$ can be generated by a set of relations $\mathcal{R}$ satisfying the  following conditions:
\begin{enumerate}
\renewcommand{\theenumi}{\arabic{enumi}}
\item each vertex of $Q$ is the tail of at most two arrows, and the head of at most two arrows;

\item each relation in $\mathcal{R}$ is just a monomial in the arrows of $Q$;

\item for each arrow $b \in Q_1$, there is at most one arrow $a \in Q_1$ with $ta=hb$ and at most one arrow $c \in Q_1$ with $tb=hc$ such that $ab \notin \mathcal{R}$ and $bc \notin \mathcal{R}$. 
\end{enumerate}
For the details behind the construction of their indecomposable representations, known as string and band modules, we refer the reader to \cite{MR876976,MR801283}.

\begin{corollary}\label{cor:MFstring}
A string algebra $KQ/I$ is MF if and only if every subquiver $L \subseteq Q$ of type $\widetilde{A}_n$ contains a relation from $I$.
\end{corollary}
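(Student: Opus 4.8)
The plan is to prove both implications via Theorem~\ref{thm:tamemf}, reducing the question of being MF to being Schur-representation-finite, and then translating this representation-theoretic condition into the combinatorial one about $\widetilde{A}_n$ subquivers. Recall that string algebras are tame, so Theorem~\ref{thm:tamemf} applies, and $KQ/I$ is MF if and only if it is Schur-representation-finite, i.e., if and only if there are only finitely many Schur string and band modules of each dimension vector. The key observation is that the indecomposable Schur modules of a string algebra are very restricted: a band module is never Schur (its endomorphism algebra contains $K[t,t^{-1}]$-worth of endomorphisms, or at least is larger than $K$ in the relevant sense), and a string module is Schur precisely when its string $w$ has no repeated ``rotational'' structure---more precisely, string modules always have endomorphism algebra larger than $K$ only in special configurations, so in fact essentially all string modules are bricks except in degenerate cases. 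The crucial point for infinitude is: an infinite family of Schur modules of bounded dimension vector can only arise from band modules, and band modules exist (parametrized by $K^\ast$) exactly when $Q$ admits a cyclic reduced walk avoiding the relations, i.e., a ``band.''

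First I would make precise the dictionary between bands in the sense of string algebra combinatorics and $\widetilde{A}_n$ subquivers: a band for $KQ/I$ gives, after passing to the underlying subquiver traversed by the cyclic walk, a subquiver whose underlying graph is a cycle, hence of type $\widetilde{A}_n$, and on which no relation of $I$ is supported. Conversely, if $L \subseteq Q$ is an $\widetilde{A}_n$ subquiver containing no relation from $I$, then the cyclic walk around $L$ is a band (the string algebra axioms, together with the absence of relations on $L$, guarantee that this walk is reduced and avoids relations and that consecutive arrows meet the ``at most two in/out'' and ``unique composable continuation'' conditions), so $A$ has an infinite family of band modules---one for each $\lambda \in K^\ast$, all of the same dimension vector---and these are pairwise non-isomorphic. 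I would then need the fact (from \cite{MR876976} or \cite{MR1090218}) that band modules $M(w,\lambda,1)$ of ``length one'' are bricks, i.e., Schur: this gives infinitely many Schur modules of a fixed dimension vector, so $A$ is not Schur-representation-finite, hence not MF.

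For the converse direction---if every $\widetilde{A}_n$ subquiver of $Q$ contains a relation, then $A$ is MF---I would argue that the absence of bands forces $A$ to be representation-finite, in particular Schur-representation-finite, hence MF by Theorem~\ref{thm:tamemf}. The point is that for a string algebra, the indecomposable modules are exactly the string modules and band modules; if there are no bands, then there are only string modules, and a string algebra with only finitely many strings (equivalently, no bands, since an infinite string would have to eventually repeat and produce a band or at least the string algebra would be representation-infinite only via bands) is representation-finite. I would cite the classical description of string algebra representation type: a string algebra is representation-finite if and only if it has no bands (see \cite{MR876976,MR801283}), and having no band is equivalent, by the dictionary above, to every $\widetilde{A}_n$ subquiver carrying a relation. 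Then representation-finite implies Schur-representation-finite trivially, and Theorem~\ref{thm:tamemf} (or directly Proposition~\ref{prop:schurfin-mf}) gives MF.

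The main obstacle I anticipate is the precise correspondence between ``bands'' and ``$\widetilde{A}_n$ subquivers with no relation,'' in both directions. In one direction a band walk might traverse the same arrow more than once, so one must check that the image subquiver is still genuinely of type $\widetilde{A}_n$ (a single cycle) and not something with a repeated arrow---here the string algebra axioms (each vertex is head/tail of at most two arrows, unique composable continuations avoiding relations) are exactly what rule out pathologies, but this needs to be spelled out carefully. In the other direction, given an $\widetilde{A}_n$ subquiver $L$ with no relation on it, one must verify that traversing $L$ cyclically really yields an admissible band word in the sense of \cite{MR876976}---in particular that it is not a proper power of a shorter word (if it is, pass to the primitive root, which still gives a band) and that the ``direct/inverse letter'' alternation conditions hold, which again follow from the string algebra axioms applied to the vertices of $L$. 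Handling these combinatorial verifications cleanly, rather than the representation theory, is where the real work lies.
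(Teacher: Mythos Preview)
Your converse direction contains a genuine gap. You claim that if every $\widetilde{A}_n$ subquiver of $Q$ carries a relation, then there are no bands, so the algebra is representation-finite. This is false: a band is a cyclic reduced walk, and such a walk may traverse a vertex more than once, so its supporting subquiver need not be a simple cycle. The paper's own Example~\ref{ex:butterly} (the butterfly quiver) is a counterexample to your dictionary: it is a minimal representation-infinite string algebra, hence has a primitive band $B=c^{-1}ef^{-1}db^{-1}a$, yet every $\widetilde{A}_n$ subquiver contains a length-two relation. The band passes through vertex~$3$ twice, so its underlying subquiver has a vertex of valency four and is not of type $\widetilde{A}_n$. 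Your anticipated obstacle---that the string algebra axioms ``rule out pathologies'' and force the image subquiver of a band to be a single cycle---is therefore not surmountable; the axioms do no such thing.

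The paper's argument for this direction is genuinely different and uses an ingredient you are missing. Rather than eliminating bands, one observes that under the hypothesis any band must revisit some vertex, and then invokes Krause's computation of maps between band modules \cite{MR1090218} to conclude that every band module for such a band admits a nonzero nilpotent endomorphism and hence is not Schur. Since there are only finitely many string modules of any fixed dimension vector, this yields Schur-representation-finiteness (not representation-finiteness), and Theorem~\ref{thm:tamemf} finishes. Your forward direction is essentially the same as the paper's, though note that your parenthetical assertion that ``a band module is never Schur'' is wrong and contradicts what you correctly use a few lines later: length-one band modules on a simple cycle are bricks, and this is exactly what produces the infinite Schur family.
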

\begin{proof}
If $Q$ contains a type $\tilde{A}_n$ subquiver $L$ in which there is no relation, then the family of band modules which are supported precisely on $L$ and one dimensional at each vertex shows that $KQ/I$ is not Schur-representation-finite and thus not MF.

On the other hand, if every subquiver of type $\tilde{A}_n$ in $Q$ contains a relation, then any band must traverse some vertex more than once. 
Krause's computation of morphisms between band modules (see conditions (H1)-(H4) of \cite[p.193]{MR1090218}) then shows that each band module admits a nilpotent endomorphism.  Thus $KQ/I$ has no Schur band modules and must be Schur-representation-finite, so it is also MF.
\end{proof}

The following example illustrates this corollary, and was in fact our first example of a representation-infinite MF algebra.

\begin{example}\label{ex:butterly}
Consider the string algebra $\Lambda=KQ/I$ given by the ``butterfly quiver''
\begin{equation}
Q=
\vcenter{\hbox{  
 \begin{tikzpicture}[point/.style={shape=circle,fill=black,scale=.5pt,outer sep=3pt},>=latex]
   \node[point,label={left:$1$}] (1) at (-1,1) {};
  \node[point,label={right:$2$}] (2) at (1,1) {};
  \node[point,label={above:$3$}] (3) at (0,0) {};
  \node[point,label={left:$4$}] (4) at (-1,-1) {};
  \node[point,label={right:$5$}] (5) at (1,-1) {};
 
  \path[->]
  	(1) edge node[above] {$a$} (3)
  	(1) edge node[left] {$c$} (4)
  	(2) edge node[above] {$b$} (3)
  	(2) edge node[right] {$d$} (5)
  	(3) edge node[below] {$e$} (4)
  	(3) edge node[below] {$f$} (5);
   \end{tikzpicture} 
   }} 
\end{equation}
with $I$ the ideal generated by all length 2 paths: $ea, eb, fa, fb$.  We see that
$\Lambda$ is a string algebra with 
\begin{equation}
B=c^{-1}ef^{-1}db^{-1}a
\end{equation}
the only primitive band, and in fact $\Lambda$ is a minimal representation-infinite algebra. The only subquivers of type $\tilde{A}_n$ are the left and right triangles of the diagram, which each contain a length two path in the ideal of relations.  So by the corollary, $\Lambda$ is MF.
\end{example}

\subsection{MF conjecture and open questions}
Finally, we state another conjecture of Weyman, which is that Theorem \ref{thm:tamemf} holds without the assumption that $A$ is tame.

\begin{conjecture}[{\bf Multiplicity Free Conjecture}]\label{conj:MFC}
An algebra is Schur-representation-finite if and only if it is MF.
\end{conjecture}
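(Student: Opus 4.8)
The implication from Schur-representation-finiteness to the MF property is already Proposition~\ref{prop:schurfin-mf}, so the substance of the conjecture is the converse: every MF algebra is Schur-representation-finite. Since every finite-dimensional algebra is tame or wild, and the tame case is exactly Theorem~\ref{thm:tamemf}, it suffices, modulo the results already established here, to prove the single assertion
\[
\text{every wild algebra fails to have the MF property.}
\]
This is consistent with what is known: a wild algebra carries, via a representation embedding from $\module(K\langle x,y\rangle)$ (which reflects isomorphism classes), an infinite family of pairwise non-isomorphic modules of a single dimension vector, and in the strictly wild case these may be taken to be bricks, so $A$ is not even Schur-representation-finite. The problem is to upgrade this to the geometric statement that $\dim \M(C)^{ss}_\theta \geq 1$ for some irreducible component $C$ of some $\module(A,\dd)$ and some weight $\theta$, which --- by the monotonicity argument in the proof of Proposition~\ref{prop:schurfin-mf} --- is precisely the negation of the MF property.

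For a wild algebra $A$, the plan is as follows. Fix $r \geq 3$ and a representation embedding $F\colon \rep(K_r) \to \module(A)$ from the $r$-Kronecker quiver $K_r$, say $F(-) = - \otimes B$ for a bimodule $B$ that is finitely generated and free as a right module over the path algebra of $K_r$. Choose a dimension vector $\dd'$ and a weight $\theta'$ of $K_r$ for which $\M(K_r,\dd')^{ss}_{\theta'}$ is a positive-dimensional projective variety; this is possible precisely because $K_r$ is wild (for instance any imaginary Schur root $\dd'$ works). Then $F$ transports the corresponding irreducible family of $\theta'$-stable $K_r$-representations to an irreducible family of $A$-modules, all of a single dimension vector $\dd$ read off from $B$; since $F$ reflects isomorphism classes these modules are pairwise non-isomorphic, and being an irreducible family they lie in one irreducible component $C \subseteq \module(A,\dd)$. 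Because $F$ induces a split embedding of Grothendieck groups, $\theta'$ extends to a weight $\theta$ on $\module(A,\dd)$ with $\theta(\ddim F(W)) = \theta'(\ddim W)$ for every $K_r$-representation $W$. If one can show that each $F(V)$ in the family is $\theta$-stable, then its isomorphism class is a point of $\M(C)^{ss}_\theta$, these points are pairwise distinct, and hence $\dim \M(C)^{ss}_\theta \geq 1$, so $A$ is not MF.

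The crux --- and the step I expect to be genuinely hard --- is ``$F(V)$ is $\theta$-stable''. A representation embedding is exact and reflects isomorphisms, but it need not reflect submodules: $F(V)$ may acquire $A$-submodules $N$ that are not of the form $F(V')$ for a subrepresentation $V' \leq V$, and such an $N$ can violate the inequality $\theta(\ddim N) \leq 0$ of King's criterion even though $\theta'$ makes $V$ stable. This is exactly the phenomenon behind the example of a brick that is stable for no weight at all (the remark following Lemma~\ref{lemma:stable-Schur-homogeneous}; see \cite{Reineke:2008fk}). In the tame case this obstruction dissolved because Crawley-Boevey's Theorem~D \cite{MR931510} forces the generic brick of $C$ to be \emph{homogeneous}, hence $\theta^M$-stable by Lemma~\ref{lemma:stable-Schur-homogeneous}; no homogeneity, and no obviously canonical weight, is available for wild algebras. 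I would try to get around it either by choosing $F$ and the family inside $\rep(K_r)$ carefully enough that the submodule lattice of each $F(V)$ is controlled by that of $V$, or by constructing a pencil of semi-invariants on $C$ directly from a minimal projective presentation of a generic brick in $C$. A reduction to minimal representation-infinite algebras via the Bongartz--Ringel trichotomy does not obviously help, because Lemma~\ref{lem:mfquot} propagates the MF property to quotients but gives no control in the other direction.
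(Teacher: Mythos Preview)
The statement you are addressing is a \emph{conjecture} in the paper, not a theorem; there is no ``paper's own proof'' to compare against. The paper proves only the forward direction (Proposition~\ref{prop:schurfin-mf}) and the tame case of the converse (Theorem~\ref{thm:tamemf}), and explicitly leaves the wild case open.

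Your reduction contains a genuine error. You assert that it suffices to show ``every wild algebra fails to have the MF property,'' but this is false, and the paper supplies the counterexample in the very paragraph following the conjecture: preprojective algebras of Dynkin quivers are wild and Schur-representation-finite, hence MF by Proposition~\ref{prop:schurfin-mf}. So there \emph{are} wild MF algebras, and your target statement cannot hold. What actually remains to be shown is that a wild algebra which is \emph{not} Schur-representation-finite is not MF; equivalently, that an algebra admitting infinitely many bricks of a single dimension vector must have a component $C$ and a weight $\theta$ with $\dim \M(C)^{ss}_\theta \geq 1$. Your representation-embedding strategy, as written, does not use the hypothesis ``not Schur-representation-finite'' at all, which is why it overshoots into a false statement.

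Even after correcting the target, the obstacle you yourself identify is exactly the hard part and is not overcome by your plan. A representation embedding $F$ need not send stable representations to stable modules, because $F(V)$ can have submodules not in the essential image of $F$; your proposed fix of ``choosing $F$ carefully'' has no content, and the alternative of producing semi-invariants from a projective presentation of a generic brick runs straight into the phenomenon recorded after Lemma~\ref{lemma:stable-Schur-homogeneous}: a brick need not be semi-stable for any weight. In the tame case the paper circumvents this via homogeneity (Crawley-Boevey's Theorem~D), and there is no known substitute in the wild case. So what you have written is a correct identification of what is open, together with a first approach and an honest acknowledgment of where it breaks, but it is not a proof and the initial reduction is misstated.
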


It has been pointed out to us by Geiss and Schr\"oer that preprojective algebras of Dynkin quivers, which are generally wild with infinite global dimension, are Schur-representation-finite because every Schur module is rigid.

\begin{question}
Do there exist wild, Schur-representation-finite algebras of finite global dimension?
\end{question}


\bibliographystyle{alpha}
\bibliography{dense-orbits}

\end{document}